\documentclass[letterpaper, USenglish]{amsart}
\usepackage[latin1]{inputenc}
\usepackage[table, dvipsnames]{xcolor}
\usepackage{amsmath}
\usepackage{amsfonts}
\usepackage{amssymb}
\usepackage{amsthm}
\usepackage{graphicx}
\usepackage{mathtools}
\usepackage{verbatim}
\usepackage{hyperref}
\usepackage{array}
\usepackage{float}
\usepackage{tikz}
\usepackage{todonotes}
\usepackage{thmtools}
\usepackage{indentfirst}
\usepackage{thm-restate}
\usepackage{isodate}
\usepackage[linesnumbered,ruled,noend]{algorithm2e}

\newcommand{\commentR}[1]{\tcp*[r]{\parbox[t]{4.5cm}{\raggedright\normalfont{#1}}}}
\newcommand{\commentF}[1]{\tcp*[f]{\parbox[t]{4.5cm}{\raggedright\normalfont{#1}}}}
\SetKwComment{tcp}{}{}
\let\oldnl\nl
\newcommand{\nonl}{\renewcommand{\nl}{\let\nl\oldnl}}
\makeatother
\usepackage{etoolbox} 
\makeatletter
\patchcmd{\algocf@makecaption@ruled}{\hsize}{\textwidth}{}{} 
\patchcmd{\@algocf@start}{-1.5em}{0em}{}{}
\makeatother
\SetAlCapHSkip{0pt} 
\newcommand{\adj}{\textup{adj\,}}
\newcommand{\bigo}{\mathcal{O}}
\newcommand{\Z}{\mathbb{Z}}
\newcommand{\Q}{\mathbb{Q}}
\newcommand{\R}{\mathbb{R}}
\newcommand{\lat}{\mathcal{L}}
\newcommand{\N}{\mathbb{N}}
\newcommand{\Mn}{\textup{M}_n}
\newcommand{\SAP}{\texttt{SAP\,}}

\newcommand{\SIAP}{\texttt{SIAP\,}}

\newcommand{\CAP}{\texttt{CAP\,}}

\newcommand{\mb}[1]{\mathbf{#1}}

\newcommand\norm[1]{\left\lVert #1 \right\rVert}
\newcommand\opnorm[1]{\left\lVert #1 \right\rVert_{\text{op}}}

\tikzstyle{line} = [draw, thick, color=black]

\definecolor{bubbles}{rgb}{0.91, 1.0, 1.0}
\definecolor{columbiablue}{rgb}{0.61, 0.87, 1.0}
\definecolor{lightcyan}{rgb}{0.88, 1.0, 1.0}

\newtheorem{theorem}{Theorem}
\newtheorem{proposition}[theorem]{Proposition}
\newtheorem{lemma}[theorem]{Lemma}
\newtheorem{corollary}[theorem]{Corollary}
\theoremstyle{definition}
\newtheorem*{definition*}{Definition}
\declaretheorem[name=Definition]{definition}

\numberwithin{theorem}{section}
\numberwithin{proposition}{section}
\numberwithin{lemma}{section}
\numberwithin{corollary}{section}
\numberwithin{definition}{section}
\numberwithin{example}{section}
\numberwithin{equation}{theorem}
\numberwithin{figure}{section}
\numberwithin{table}{section}

\title{Simultaneous Approximation for Lattice-based Cryptography}
\author{Julia VanLandingham}

\date{\today}

\begin{document}

\thanks{This research was supported by NSF award DMS-2336000 under the supervision of PI Dr. Daniel E. Martin.}

\begin{abstract} 
     We define two new problems called SIAP and CAP related to solving SIVP and CVP in a subset of lattices called Simultaneous Approximation (SA) lattices.
     We give dimension- and gap-preserving, deterministic polynomial-time and space reductions from SVP$_\gamma$, SIVP$_\gamma$, and CVP$_\gamma$ to their corresponding problems in  SA lattices. These reductions show that instances of these problems in SA lattices are just as hard as general instances and thus are interesting new problems to consider for use in cryptography. We also show that the reductions are optimal in regards to integer inflation.
\end{abstract}
\maketitle

\thanks

\section{Introduction}\label{intro_section}

Over the past two decades, lattice problems have risen in popularity for cryptographic use.
Lattice-based systems provide several benefits: average to worst case reduction \cite{worst_to_average}, accepted quantum and classical security, and homomorphic encryption capability \cite{homomorphic}. 
However, naive implementations require large public key sizes. Our goal is to study lattice problems that provide the ability for smaller key sizes for systems based on them. 

In 2006, Lyubashevsky and Micciancio introduced ideal lattices into cryptography \cite{ideal_lattices}. 
These lattices provide ability for smaller public key sizes, but have no guarantee that their instances of lattice problems are as hard as  general instances. 
Campbell et al. outlined an algorithm in \cite{soliloquy} (verified in \cite{mildly_short}) for specific ideal lattices that finds an approximation of the shortest vector by a factor $\exp \big(\tilde{\bigo}(\sqrt{m})\big)$, where $m$ is a parameter of the lattice, while the best known polynomial-time generic lattice algorithms have an approximation factor $\exp \big(\tilde{\bigo}(m)\big)$.
Other indications that ideal lattices may not be as classically or quantum secure as general lattices can be seen in \cite{multiquadratics,Twisted_PHS,soliloquy,recover_short_gen,stickelberger,novoselov,ideal_svp}  and 
\cite{unit_group,mildly_short,quantum_SVP_ideals}, respectively.

Simultaneous Diophantine Approximation studies the problem of approximating a real vector by a rational vector with entries that have a common denominator.
While Diophantine Approximation problems have been successfully used in cryptanalysis (see \cite{Garba,wiener}), minimal research has been done towards basing cryptosystems on them (see \cite{inhomo_DA,DA-encrypt}).
In 1982, Lagarias considered a version of Simultaneous Diophantine Approximation, called the Good Diophantine Approximation problem (GDA), that includes a bound on a solution's denominator and gave a reduction from GDA to the approximate Shortest Vector Problem (SVP) \cite{lagarias}.
In 2020, Martin considered the version of Simultaneous Approximation that does not impose any bound on the denominator. He proposed a new class of lattices related to this problem and called them Simultaneous Approximation (SA) lattices \cite{daniel}.
An SA lattice of dimension $n$ can be described by $n+1$ integers rather than the $n^2$ needed in general. 
Solving SVP in these SA lattices can be viewed as solving the Simultaneous Approximation Problem (SAP) (see Section \ref{approximate_section}).
Using SA lattices, Martin gave a polynomial-time and space reduction from approximate SVP to SAP. He also gave reductions from GDA to SAP, and vice versa, to show that approximate SVP is equivalent to SAP.
Unfortunately, this does not guarantee that the public key size for an SA lattice-based system will be smaller than for a general-lattice based system of the same security level. It does show that solving SVP in an SA lattice is just as hard as in a general lattice - something ideal lattices cannot claim. 

In the next section, we define two new SA lattice problems called SIAP and CAP that can be viewed as solving SIVP and CVP, respectively, in an SA lattice.
We adapt Martin's algorithm to demonstrate equivalence of these problems with their generic counterparts. 
This adaptation remains dimension- and gap-preserving, but significantly decreases integer inflation, which is a critical factor in bounding key sizes needed to maintain security in future SA lattice-based cryptosystems.

More specifically, we prove the following: 
\begin{theorem}
    Algorithms \ref{SVP_reduction}, \ref{SIVP_reduction}, and \ref{CVP_reduction} are dimension- and gap-preserving, deterministic polynomial-time reductions from SVP$_\gamma$, SIVP$_\gamma$, and CVP$_\gamma$, to SAP$_\gamma$, SIAP$_\gamma$, and CAP$_\gamma$, respectively, with input length scaled by $\bigo\big(n^2\log(n)\big)$. 
\end{theorem}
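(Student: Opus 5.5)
The plan is to reduce everything to one structural fact about SA lattices. I recall (following Martin \cite{daniel}) that an SA lattice of dimension $n$ is given by integers $(\alpha_1,\dots,\alpha_n,Q)$, or equivalently by a rational vector $\boldsymbol{\alpha}=(a_1,\dots,a_n)/Q$. It is spanned by $\boldsymbol{\alpha}$ together with $\Z^n$, possibly after a fixed diagonal scaling. Equivalently, it is a full-rank integer lattice whose basis in Hermite normal form has all diagonal entries $1$ except one. So the core of all three reductions is the following transformation: from a basis $B$ of an arbitrary full-rank lattice $\lat\subseteq\Z^n$, construct in deterministic polynomial time an SA lattice $\lat'$ and an explicit invertible linear map $T$ with $T(\lat)=\lat'$, such that $T$ distorts norms by at most a factor we can absorb.

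For that, I will adapt Martin's algorithm. The first step is to perturb $B$ to a nearby lattice whose quotient $\Z^n/\lat$ is cyclic. This holds exactly when the gcd of the $(n-1)\times(n-1)$ minors is $1$, i.e.\ when the Smith normal form has one nontrivial invariant factor. I would achieve it by scaling $\lat$ by a large integer $M$ and adding a small integer perturbation (e.g.\ adding $\mathbf{e}_1$-type corrections or choosing suitable primes). A lattice with cyclic quotient has HNF of the form $\begin{pmatrix} I_{n-1} & \mathbf{0}\\ \mathbf{a}^T & d\end{pmatrix}$ up to transpose, which is precisely an SA lattice described by $n+1$ integers.

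Gap preservation comes from choosing $M$ large relative to the perturbation. All lengths in the scaled lattice are multiplied by $M$, while the perturbation changes each basis vector by $\bigo(1)$. Hence $\lambda_1$, the successive minima, and distances to a target (target also scaled by $M$) change by a relative error $\bigo(\sqrt n\,\|B^{-1}\|/M)$. Choosing $M$ of bit length polynomial in the input, with $\log M=\bigo(n\log n+\log\|B\|)$ via Hadamard's bound, makes the error negligible. A $\gamma$-approximate solution in $\lat'$ then pulls back to a $\gamma$-approximate solution in $\lat$. For exactness, I would instead arrange $T$ to be exactly $M$ times an isometry up to rounding, and use integrality of squared norms to absorb the rounding.

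For SVP I pull back the short vector. For SIVP I pull back $n$ independent vectors, noting that independence is preserved by the invertible $T$. For CVP I map the target $\mathbf{t}\mapsto T\mathbf{t}$ and pull back the close vector. Dimension is unchanged by construction. The length bound $\bigo(n^2\log n)$ follows from the HNF entries being bounded by $d=\det\lat'\le M^n\det\lat$, with $n+1$ such integers.

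I expect the main obstacle to be the cyclic-quotient step. It has to be done deterministically, with controlled perturbation, and with a guarantee that the gcd of minors becomes $1$. My first attempt would be to perturb one basis column by $\mathbf{e}_j$ for successive $j$. I would then argue via a determinant-as-linear-function-of-entries argument that among polynomially many candidate perturbations, one makes the relevant minors coprime. Finding it deterministically requires only gcd computations.
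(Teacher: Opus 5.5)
\textbf{There is a genuine gap at the structural step the whole plan rests on.} A full-rank $\lat\subseteq\Z^n$ with $\Z^n/\lat$ cyclic (HNF diagonal $(1,\dots,1,d)$) is not an SA lattice.

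With your HNF, $\lat=\{\mathbf v\in\Z^n : \mathbf w\cdot\mathbf v\equiv 0 \pmod d\}$ for $\mathbf w=(-a_1,\dots,-a_{n-1},1)$. This single-congruence lattice is exactly the \emph{dual} of the SA lattice $\Z^n+\Z\,\mathbf w/d$.

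SA lattices themselves sit on the other side. Cleared of denominators, $d\Z^n+\Z\mathbf a$ with $\gcd(a_1,\dots,a_n,d)=1$ has index $d^{n-1}$ in $\Z^n$ (cf.\ Lemma~\ref{covol}). The quotient has exponent dividing $d$, so for $n\geq 3$ and $d\geq 2$ it is never cyclic. For example, $\mathbf x=(1,2,3)/5$ gives quotient $(\Z/5\Z)^2$ and HNF diagonal $(1,5,5)$.

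No rescaling repairs this. Suppose $s\Z^n\subseteq\lat$ with $\Z^n/\lat\cong\Z/d\Z$ and $d\geq 2$. Then $d\mid s$, and $\lat/s\Z^n$ has order $s^n/d\geq s^{n-1}>s$ but exponent dividing $s$. So $\lat$ is never of the form $s(\Z^n+\Z\mathbf x)$. A non-uniform diagonal rescaling would in any case destroy gap preservation.

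Consequently, after your perturbation step you hold a lattice to which the SAP, SIAP and CAP oracles do not apply. Dualizing does not help, since SVP, SIVP and CVP on $\lat$ do not transfer gap-preservingly to the same problems on $\lat^*$.

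\textbf{What the reductions actually need} is a sublattice $S\subseteq\lat$ whose basis is nearly a scaled identity and with $\lat/S$ cyclic. The inverse of that basis then sends $S$ to $\Z^n$ and $\lat$ onto $\Z^n+\Z\mathbf x$.

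The paper builds $S=M\tilde M\Z^n$ from $\tilde M=c\,\adj M+A$, so $M\tilde M=c\det(M)\,I_n+MA$ with $A$ small. It imposes cyclicity on $\lat/S\cong\Z^n/\tilde M\Z^n$, i.e.\ on the $(n-1)$-minors of $\tilde M$. Two entries of the last row of $\adj\tilde M$ are made coprime by making successive leading minors coprime (Lemma~\ref{solution_lemma}), with Iwaniec's bound on Jacobsthal's function keeping $A$ small. Then $\mathbf x=\tilde M^{-1}(b_1,b_2,0,\dots,0)^T$ comes from B\'ezout coefficients.

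So the cyclic-quotient perturbation must be applied to a scaled approximate inverse of the basis, not to the scaled basis itself. Your remaining ingredients are compatible with this and match the paper's Proposition~\ref{gap-preservation}: a map equal to a scalar times $I_n$ up to small error, integrality of squared norms to get exact $\gamma$, and pulling back independence and targets. However, your ``polynomially many candidate perturbations'' claim would still need exactly that inductive coprimality argument plus a Jacobsthal-type bound.
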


This work provides a significantly better bound on integer inflation than Martin's and we show in Section \ref{optimal_section} that it is optimal.
These reductions also immediately give the following:

\begin{theorem}
SIAP$_\gamma$ and CAP$_\gamma$ are NP-hard for any constant $\gamma$ under the $\ell_1,\ell_2$, or $\ell_\infty$-norm. SAP$_\gamma$ is NP-hard under the $\ell_\infty$-norm for any constant $\gamma$ and is NP-hard under randomized reductions for the $\ell_1$ or $\ell_2$-norm with constant $\gamma< 2$ or $\gamma < 2^{1/2}$, respectively. 
\end{theorem}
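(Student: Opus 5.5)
The plan is to compose the reductions of the preceding theorem with the known NP-hardness results for the generic problems. Since Algorithms \ref{SVP_reduction}, \ref{SIVP_reduction}, and \ref{CVP_reduction} are deterministic, polynomial-time, and gap-preserving, any NP-hardness of SVP$_\gamma$, SIVP$_\gamma$, or CVP$_\gamma$ for a given $\gamma$ and norm transfers directly to SAP$_\gamma$, SIAP$_\gamma$, or CAP$_\gamma$ for the same $\gamma$ and norm. If the generic hardness holds only under randomized reductions, composing with our deterministic map still gives a randomized reduction. The argument is just composition. The one point to check is that the reductions are valid in each of the $\ell_1$, $\ell_2$, and $\ell_\infty$ norms, not only in $\ell_2$. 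I would check this by verifying that each step of the reduction (the unimodular change of basis producing the SA form, and the lattice vector correspondences) preserves lattice vectors exactly, or scales them by a common factor, so that norm ratios are unchanged for every $\ell_p$ norm.

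The steps are as follows.

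First, for CAP$_\gamma$ I would cite the NP-hardness of CVP$_\gamma$ for every constant $\gamma$ in all $\ell_p$ norms (Arora--Babai--Stern--Sweedyk; in fact for almost-polynomial factors). Composing with Algorithm \ref{CVP_reduction} gives the claim.

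Second, for SIAP$_\gamma$ I would cite the NP-hardness of SIVP$_\gamma$ for every constant $\gamma$ in any $\ell_p$ norm (Blömer--Seifert, who derive it from CVP hardness). Composing with Algorithm \ref{SIVP_reduction} gives the claim.

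Third, for SAP$_\gamma$ I would cite two results and compose each with Algorithm \ref{SVP_reduction}:
\begin{itemize}
\item van Emde Boas's NP-hardness of SVP in $\ell_\infty$, together with Dinur's extension to every constant factor, for the $\ell_\infty$ case;
\item Micciancio's randomized NP-hardness of SVP$_\gamma$ in $\ell_p$ for $\gamma<2^{1/p}$, which gives exactly $\gamma<2$ for $\ell_1$ and $\gamma<\sqrt2$ for $\ell_2$, matching the statement.
\end{itemize}

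I expect the main obstacle to be confirming that the gap-preservation of the earlier theorem is norm-independent. If the reduction is only shown to preserve gaps in $\ell_2$ (for example, via some rounding or embedding argument that uses Euclidean geometry), then that step must be revisited. My first approach would be to show that the output lattice is the image of the input lattice under a map that is an isometry in every $\ell_p$ norm up to a global scalar, such as an integer-scaling or a coordinate-preserving unimodular transformation. In that case, the ratio between the approximate solution and the optimum is identical in every norm.
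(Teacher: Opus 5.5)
Your argument is the paper's: the statement follows by composing Algorithms \ref{SVP_reduction}, \ref{SIVP_reduction}, and \ref{CVP_reduction} with known hardness results (Dinur and Bennett--Peikert for SVP, where you cite Micciancio for the same $2^{1/p}$ threshold; Bl{\"o}mer--Seifert for SIVP; Arora et al.\ for CVP). The norm concern is already settled by the correctness theorems for those algorithms, which are stated for $\ell_1,\ell_2,\ell_\infty$. Your proposed check would not work, though, since $M\tilde{M}=c\det(M)I_n+MA$ is only approximately a scaled isometry; exact preservation of $\gamma$ instead comes from Proposition \ref{gap-preservation}, which uses that $\norm{\cdot}^2$ is integer-valued on integer vectors together with the large choice of $c$.
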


Each  reduction utilizes a method of approximating a general lattice by an SA lattice which we describe colloquially in Section \ref{intuition_section} and analyze throughout Section \ref{approximate_section}.
We give reductions from the general to SA specific cases of SVP, SIVP, and CVP in Sections \ref{SVP_section}, \ref{SIVP_section}, and \ref{CVP_section}, respectively, and discuss the optimality of the reductions in Section \ref{optimal_section}.

\section{Approximating with SA Lattices}\label{approximate_section}

\begin{definition}\label{SA_def}
    A \emph{Simultaneous Approximation  (SA) lattice} of dimension $n$ is a lattice generated by the columns of $I_n$ and a vector $\mb{x}\in\Q^n$.
\end{definition}

Our goal is to study three classical lattice problems in the restricted setting of SA lattices. For the remainder of the paper, let $\norm{\cdot}$ be some $\ell_p$-norm and $\min^\times$ represent the minimum excluding 0.
Recall that the approximate Shortest Vector Problem (SVP$_\gamma$) is to find a nonzero lattice vector at most $\gamma$ times longer than the shortest possible. 
Also recall that the fractional part of a real number $x$ is represented as $\{x\}$. For $\mb{x} \in \Q^n$, we will take $\{\mb{x}\}$ to mean the vector of element-wise fractional parts. Consider what it means to solve SVP$_\gamma$ in an SA lattice.

\begin{definition}
    Given $\gamma\in \Q$ and  $\mb{x}\in\Q^n$, a solution to the \emph{approximate Simultaneous Approximation Problem} (SAP$_\gamma$) is an  output $b_0\in\Z$ such that $0<\norm{\{b_0\mb{x}\}}\leq \gamma\cdot\min^{\times}_{b\in\Z}\left\{ \norm{\{b\mb{x}\} }\right\}.$ Let $\SAP(\gamma, \mb{x})$ denote an oracle for such a problem.
\end{definition}

To find a short vector in an SA lattice we need to find $a_i,b\in \Z$ such that $a_1\mb{e}_1+\cdots+a_n\mb{e}_n - b\mb{x} \approx 0$ where $\mb{e}_i$ is the $i^{\text{th}}$ column of $I_n$.
This is the same as solving SAP$_\gamma$ with an input of $\mb{x}$ and $b$ the output since $a_1\mb{e}_1+\cdots+a_n\mb{e}_n - b\mb{x}=\{b\mb{x}\}.$
Thus, solving SVP$_\gamma$ in an SA lattice can be viewed as solving SAP$_\gamma$. This relationship is where these SA lattices get their name.

Recall that the $i^{\text{th}}$ successive minimum of a lattice, denoted $\lambda_i$, is the smallest radius $r$ such that the closed ball $B(0,r)$ contains $i$ linearly independent lattice vectors. 
The approximate Short Independent Vectors Problem (SIVP$_\gamma$) is to find a set of $n$ independent lattice vectors that all are at most $\gamma$ times longer than $\lambda_n$. 
The following new problem can be viewed as solving SIVP$_\gamma$ in an SA lattice:

\begin{definition}
    Given $\gamma\in \Q$ and $\mb{x}\in\Q^n$, a solution to the \emph{approximate Shortest Independent Approximation Problem} (SIAP$_\gamma$) is an output $b_1,b_2,\dots,b_n\in \Z$ such that $\{b_i\mb{x}\}$ are linearly independent and $\max_i \left\{\norm{\{b_i\mb{x}\}}\right\}\leq \gamma\lambda_n.$
    Let $\SIAP(\gamma, \mb{x})$ denote an oracle for such a problem.
\end{definition}

Recall that the approximate Closest Vector Problem (CVP$_\gamma$) is to find a lattice vector that is within $\gamma$ times the shortest distance between a lattice vector and some given target vector.
We define the following new problem that can be viewed as solving CVP$_\gamma$ in an SA lattice:

\begin{definition}
Given $\gamma \in\Q$, $\mb{x}\in\Q^n,$ and $ \mb{t}\in\Q^n$, a solution to the \emph{approximate Closest Approximation Problem} (CAP$_\gamma$) is an output $b_0\in\Z$  such that $ 0\leq\norm{\{b_0\mb{x}-\mb{t}\}}\leq \gamma \min_{b\in\Z}\left\{\norm{\{b\mb{x}-\mb{t}\}}\right\}$. Let $\CAP(\gamma, \mb{x}, \mb{t})$ denote an oracle for such a problem.
\end{definition}

We first outline an algorithm for approximating a given lattice with an SA lattice. 

\subsection{Intuition}\label{intuition_section}
Let $\lat$ be a full-rank lattice such that $\lat = M\Z^n$ for some $M\in \Mn(\Z)$. 
Suppose we had an integer $c$, a matrix $A\in \Mn(\Z)$ with entries small  relative to  $c\cdot\det(M)$, and some vector $\mb{b}\in\Z^n$ such that the columns of $c\cdot\adj M +A$ along with $\mb{b}$ generate $\Z^n$.
Then the columns of $M(c\cdot\adj M+A)$ along with $M\mb{b}$ generate $\lat$.
Notice that $M(c\cdot\adj M +A) = c\cdot\det(M) I_n + MA$ is a \emph{nearly scaled orthonormal} matrix of lattice vectors because the entries of $A$ are small relative to $c\cdot\det(M)$. 
If we multiply everything by $(c\cdot\det(M) I_n + MA)^{-1}$, smallness will be preserved, and we get an SA lattice generated by the columns of $I_n$ and $(c\cdot\det(M) I_n + MA)^{-1} M\mb{b}$, which plays the roll of $\mb{x}$ in Definition \ref{SA_def}. 
So, if such an integer $c$, matrix $A$, and vector $\mb{b}$ can be found, then we can  construct an SA lattice that approximates $\lat$.

Say $\tilde{M}=c\cdot\adj M+A.$ In order for the columns of $\tilde{M}$ and $\mb{b}$ to generate $\Z^n$, it suffices to ensure that replacing a column of $\tilde{M}$ with $\mb{b}$ results in a matrix of determinant $\pm 1$, since the columns of such a matrix generate an index 1 subgroup of $\Z^n$.
By Cramer's rule, this is equivalent to ensuring that $\adj(\tilde{M})\mb{b}$ has some entry that is a 1.
Note that if one row of $\adj(\tilde{M})$ has entries that are collectively coprime, then it is simple to find a $\mb{b}$ vector by using appropriate B{\'e}zout coefficients.  For simplicity, we will make the first two entries of the last row of $\adj(\tilde{M})$ coprime. Let us consider an example of how.

Consider a matrix 
$M \in \textup{M}_4(\Z).$ Notice that the elements in question correspond to the determinants of submatrices constructed by removing the last column and either the first or second row. 
We find the elements of $A$, denoted $a_{i,j}$ below,  iteratively, ensuring at the $i^{\text{th}}$ step that the determinants of the  top left $i\times i$ minors of these submatrices are coprime.

The result after this process is a matrix 
$$\tilde{M} = c\cdot\adj M + A = \begin{pmatrix}
    *+a_{1,1}& * &* &*\\
    *+a_{1,2} & * &  *& *\\
    * & *+a_{2,3} & * & *\\
    * & * & *+a_{3,4} & *
\end{pmatrix}$$
where, for example, $a_{2,3}$ is chosen to make the determinants below coprime - a condition that guarantees a simularly chosen $a_{3,4}$ exists at the next (and final) stage.
$$\det\begin{pmatrix}
    -& - &| &|\\
    \textcolor{red}{*+a_{1,2}} & \color{red}{*} &  |& |\\
    \color{red}{*} & \color{red}{*+a_{2,3}} & | & |\\
    - & - & | & |
\end{pmatrix} 
\text{ and }\det\begin{pmatrix}
    \textcolor{red}{* +a_{1,1}}& \color{red}{*} & |& |\\
     -& - &| &|\\
    \color{red}{*} & \color{red}{*+a_{2,3}} & | & |\\
    - & - & | & |
\end{pmatrix}. $$

A  similar methodology was proposed by Martin in \cite{daniel} and used to show his reduction from approximate SVP to SAP.
Martin used matrices and vectors with entries from  polynomial rings  rather than $\Z$, which avoided the need to consider what is called Jacobsthal's function in the worst-case analysis. 
Jacobsthal's function and its relevance to this work are discussed in Section \ref{inflation_section}.
Our switch to using integer entries gives an improved bound on integer input lengths throughout the algorithm of
$\bigo\big(n^2\log(nk)\big)$ from Martin's $\bigo\big(n^4\log(nk)\big)$. We show our bound is optimal in Section \ref{optimal_section}.

\subsection{Algorithm}
For a matrix $M \in \Mn(\Z)$ and $i,j=1,\dots, n$, let $M^{(i)}$ denote the top left $i\times i$ minor of $M$ and let $(M)_{i,j}$ denote the element at row $i$ and column $j$.

\begin{figure}[H]
\begin{algorithm}[H]
\DontPrintSemicolon
\caption{Approximating a general lattice by an SA lattice}\label{approximate}
    \KwIn{ $(k,M)$ such that $k\in \N$ and $M\in \Mn(\Z) \,(n\ge 8)$ with $ \det(M)\not = 0$ and $k\geq \max_{i,j} |(M)_{i,j}|$}
\KwOut{ ($\mb{x},\tilde{M})$ with $\tilde{M}\in \Mn(\Z)$ and $\mb{x}\in\Q^n$ such that $M\tilde{M}\mb{x}$ and the columns of $M\tilde{M}$ generate $M\Z^n$}
    $\tilde{M}\gets  1728(nk)^{3n+15}\adj M$ \\
    $B_1 \gets$ $\tilde{M}$ with row 1 and column $n$ removed\\
     $B_2\gets$ $\tilde{M}$ with row 2 and column $n$ removed\\

    \If(\commentF{$\triangleright$ make elements different}){$(B_1)_{1,1}=(B_2)_{1,1}$}{
        $(B_1)_{1,1} \gets (B_1)_{1,1} +1$\\
        $(\tilde{M})_{2,1}\gets (\tilde{M})_{2,1} +1$
    }
        
   \For{$i=1,2,\dots, n-1$}{
        \While(\commentF{$\triangleright$ iterate until coprime}){$\gcd\Big(\det\big(B_1^{(i)}\big),\det\big(B_2^{(i)}\big)\Big) \not= 1$}{
             $(B_1)_{i,i}\gets (B_1)_{i,i} + 1$\\
             $(B_2)_{i,i}\gets (B_2)_{i,i} + 1$\\
            \If(\commentF{$\triangleright$ update top left entry also}){$i=1$}{
                 $(\tilde{M})_{1,1}\gets (\tilde{M})_{1,1} + 1$
            }
             $(\tilde{M})_{i+1,i}\gets (\tilde{M})_{i+1,i} + 1$
        }
    }

     $b_1,b_2\gets$ integers such that $|b_1|\leq |\det (B_2)|$ and\linebreak$\det(B_1)b_1 + \det(B_2) b_2 =1$ \commentR{$\triangleright$ Euclidean Algorithm}
   	\Return ($\tilde{M}^{-1}(b_1,b_2,0,\dots,0)^T$, $\tilde{M}$)
\end{algorithm}
\end{figure}

The following lemma will be crucial in our proof of Algorithm \ref{approximate}:

\begin{lemma}\label{solution_lemma}
For $a,b,c,d\in \Z,$ if $d$ and $b$ are coprime, there exists an $x\in\Z$ such that $a+bx$ and $c+dx$ are coprime.
\end{lemma}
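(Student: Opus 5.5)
Write $g=\gcd(a+bx,c+dx)$. The plan is to eliminate $x$ between the two linear forms. Let $D = ad - bc$. Then
\[
d(a+bx) - b(c+dx) = ad - bc = D .
\]
So any common divisor of $a+bx$ and $c+dx$ divides $D$ for every $x$. We will split into two cases according to whether $D$ vanishes, and in the main case use the Chinese Remainder Theorem prime by prime.

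\emph{Case $D=0$.} Here $ad=bc$. Since $\gcd(b,d)=1$, we get $b\mid a$ and $d\mid c$. Moreover $a/b=c/d=:t$ (if $b=0$ then $d=\pm1$ and the argument below adapts). Then $a+bx=b(t+x)$ and $c+dx=d(t+x)$. Choosing $x=1-t$ makes both factors $t+x$ equal to $1$. The two numbers are then $b$ and $d$, which are coprime by hypothesis. The degenerate subcases where $b=0$ or $d=0$ are handled the same way: for instance, if $b=0$ then $d=\pm1$, and we can pick $x$ with $c+dx=\pm1$.

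\emph{Case $D\neq0$.} Here any common divisor of the two values divides $D$. So it suffices to choose $x$ so that, for each prime $p\mid D$, $p$ does not divide both $a+bx$ and $c+dx$. Fix such a prime $p$. Since $\gcd(b,d)=1$, $p$ fails to divide at least one of $b$ and $d$; say $p\nmid b$. Then $a+bx\equiv 0 \pmod p$ holds for exactly one residue class of $x$ modulo $p$. Picking $x$ outside that class ensures $p\nmid a+bx$, and such a class exists because $p\ge 2$. The case $p\nmid d$ is symmetric. The Chinese Remainder Theorem over the finitely many primes dividing $D$ then gives a single $x$ satisfying all these congruence conditions simultaneously. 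For that $x$, $\gcd(a+bx,c+dx)$ divides $D$ but has no prime factor in common with $D$, so it equals $1$.

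The main point to get right is the reduction to primes dividing $D$, together with the observation that coprimality of $b$ and $d$ guarantees, for each such prime, a linear form that is nonconstant modulo $p$ and hence a forbidden set of only one residue class. The degenerate case $D=0$ must be checked separately. That check also relies on $\gcd(b,d)=1$, since without it $\gcd(b,d)$ would divide both values for every $x$.
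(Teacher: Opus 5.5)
Your proof is correct, but it takes a different route from the paper's. Both arguments begin with the same elimination $d(a+bx)-b(c+dx)=ad-bc$.

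The paper then uses B\'ezout coefficients $s,t$ with $bs+dt=1$ to complete this to the determinant-one matrix with rows $(d,-b)$ and $(s,t)$. That gives the exact identity $\gcd(a+bx,\,c+dx)=\gcd(ad-bc,\,sa+tc+x)$. Since $x$ now enters with coefficient $1$, one simply takes $x=1-sa-tc$. No case split on $ad-bc=0$ (or on $b=0$, $d=0$) is needed, because $\gcd(0,1)=1$. The witness is also explicit and computable by the extended Euclidean algorithm.

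You instead use $\gcd(b,d)=1$ only locally: at each prime $p\mid ad-bc$, one of the two forms is nonconstant mod $p$. You then glue these local conditions together with the Chinese Remainder Theorem. This is perfectly valid for an existence statement. However, it forces your separate $D=0$ analysis and its degenerate subcases. Used as an algorithm, it would also require factoring $D$.

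Your local argument shows that at each $p\mid D$ at most one residue class of $x$ is bad. The paper's identity packages all of these at once: it exhibits the good $x$ as a translate of the integers coprime to $ad-bc$. That is the form in which the lemma connects to Jacobsthal's function in the integer-inflation analysis.

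One small inaccuracy, which does not affect your proof: your closing claim is false as stated. You say that without $\gcd(b,d)=1$, the number $\gcd(b,d)$ would divide both values for every $x$. Take $a=c=1$, $b=d=2$. Then $ad-bc=0$, yet $x=0$ gives the values $1$ and $1$. The coprimality hypothesis is sufficient, not necessary.
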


\begin{proof}
    Let $s,t$ be such that $bs+dt=1$ and consider
    $$\begin{pmatrix}
        d &-b\\
        s & t
    \end{pmatrix}\begin{pmatrix}a+bx\\c+dx\end{pmatrix} = \begin{pmatrix}
        da-bc\\
        sa+tc+x
    \end{pmatrix}.$$
    
    Since the determinant of the left matrix is 1, then $da-bc$ and $sa+tc+x$ are coprime if and only if $a+bx$ and $c+dx$ are coprime. We know know such an $x$ exists to make  $da-bc$ and $sa+tc+x$ coprime.
\end{proof}

We now prove the correctness of Algorithm \ref{approximate}.

\begin{proposition}\label{approximate_prop}
    Let $M$ be as in the input of Algorithm \ref{approximate} and $\tilde{M}$ be as in the output of Algorithm \ref{approximate}. Algorithm  \ref{approximate} returns a vector $\mb{x}$ such that $M\tilde{M}\mb{x}$ and the columns of $M\tilde{M}$ generate $M\Z^n$.
\end{proposition}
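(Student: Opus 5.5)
Let $\mb{b}=(b_1,b_2,0,\dots,0)^T$ and $\mb{x}=\tilde{M}^{-1}\mb{b}$. Since $M\tilde{M}\mb{x}=M\mb{b}$ and $M$ is invertible, the claim reduces to showing that the columns of $\tilde{M}$ together with $\mb{b}$ generate $\Z^n$. Containment in $\Z^n$ is immediate because $\tilde{M}\in\Mn(\Z)$. For the converse we use the observation from Section \ref{intuition_section}: it suffices to show that replacing some column of $\tilde{M}$ by $\mb{b}$ gives a matrix of determinant $\pm1$. We replace the last column. Expanding that determinant along the new last column, only rows $1$ and $2$ contribute, and the two cofactors are $\pm\det$ of $\tilde{M}$ with column $n$ and row $1$ (resp.\ row $2$) deleted. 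Up to sign, these are exactly $\det(B_1)$ and $\det(B_2)$, provided the $B_j$ are kept synchronized with $\tilde{M}$. The determinant is then $\pm(\det(B_1)b_1-\det(B_2)b_2)$. After adjusting the sign of $b_2$ to match the cofactor signs, this is the Bézout combination from the final step, which equals $1$. So the main work is to show that the loop terminates with $\gcd(\det B_1,\det B_2)=1$, so that $b_1,b_2$ exist.

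First I would check synchronization. Row $1$ of $\tilde{M}$ is absent from $B_1$, and row $2$ is absent from $B_2$. Entry $(\tilde{M})_{i+1,i}$ is entry $(i,i)$ of $B_1$, and for $i\ge2$ it is also entry $(i,i)$ of $B_2$. For $i=1$, entry $(1,1)$ of $B_2$ is $(\tilde{M})_{1,1}$, which is why the algorithm increments $(\tilde{M})_{1,1}$ separately when $i=1$. The pre-loop adjustment works the same way, making $(B_1)_{1,1}=(\tilde{M})_{2,1}$ differ from $(B_2)_{1,1}=(\tilde{M})_{1,1}$. So at every stage $B_1,B_2$ are genuinely the two submatrices of the current $\tilde{M}$.

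Next, termination, by induction on $i$, using the invariant $\gcd(\det B_1^{(i-1)},\det B_2^{(i-1)})=1$ with the convention $\det(\cdot^{(0)})=1$. At step $i$, rows and columns $1,\dots,i-1$ of both $B_j$ are frozen, since only the diagonal entry $(i,i)$ changes, and by the same amount in both. Expanding along row $i$ gives $\det B_j^{(i)}=a_j+\det(B_j^{(i-1)})\,x$, where $x$ is the total increment and $a_j$ is the value at $x=0$. By the inductive hypothesis the coefficients are coprime, so Lemma \ref{solution_lemma} gives an $x\ge0$ making $\det B_1^{(i)}$ and $\det B_2^{(i)}$ coprime. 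The proof of Lemma \ref{solution_lemma} reduces this to making $sa_1+ta_2+x$ coprime to the fixed integer $d a_1-b a_2$. That integer must be nonzero, which gives infinitely many valid $x$ in both directions, so the while loop halts.

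Base case $i=1$: $\det B_j^{(1)}=(B_j)_{1,1}+x$. These two values differ by a fixed nonzero constant $\delta$, guaranteed by the pre-loop step, so some $x\ge0$ makes them coprime; for instance, take $(B_1)_{1,1}+x\equiv 1$ modulo every prime dividing $\delta$. I expect the main obstacle to be the nonvanishing of $d a_1-b a_2$ at later steps $i\ge2$, since if it were $0$ the two determinants would share a common factor for every $x$. If that obstacle cannot be handled cleanly, the fallback is to argue directly that the resultant-type quantity is nonzero, or to absorb the degenerate case by noting that it can only make the gcd a common divisor of the frozen coprime minors.
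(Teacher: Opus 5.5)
Your route is the paper's route. You reduce to $\gcd(\det B_1,\det B_2)=1$, note that $\det B_j^{(i)}$ is affine in the increment $x$ with coprime slopes $D_j=\det B_j^{(i-1)}$, and invoke Lemma~\ref{solution_lemma}. Your bookkeeping is more careful than the paper's, and the sign point is a real defect of line 14. With $\det(B_1)b_1+\det(B_2)b_2=1$, putting $\mb{b}=(b_1,b_2,0,\dots,0)^T$ in the last column gives determinant $(-1)^{n+1}(1-2b_2\det B_2)$, which need not be $\pm1$, so negating $b_2$ is genuinely needed. The paper's proof passes over this.

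The genuine gap is the sentence ``That integer must be nonzero.'' It is false, and neither fallback can repair it, because the algorithm as written does not halt on some valid inputs. Suppose $D_2a_1-D_1a_2=0$. Coprimality of $D_1,D_2$ then forces $(a_1,a_2)=\lambda(D_1,D_2)$ for some $\lambda\in\Z$, so $\det B_j^{(i)}=D_j(\lambda+x)$ and the gcd is $|\lambda+x|$. Lemma~\ref{solution_lemma} still holds, but only through $x=\pm1-\lambda$. If $\lambda\ge2$, no $x\ge0$ works, and the increment-only while loop never stops. This also refutes your second fallback: the gcd is $|\lambda+x|$, not a divisor of $\gcd(D_1,D_2)$.

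For a concrete instance, take $n=8$, $k=1$, and $M=I_8-E_{3,2}$, where $E_{3,2}$ is a matrix unit. Then $\adj M=I_8+E_{3,2}$ and $\tilde M=c(I_8+E_{3,2})$. Lines 4--6 do nothing, and iteration $i=1$ stops at $x=1$ with $(\tilde M)_{1,1}=c+1$ and $(\tilde M)_{2,1}=1$. At $i=2$ you get $\det B_1^{(2)}=c+x$ and $\det B_2^{(2)}=(c+1)(c+x)$, whose gcd is $c+x>1$ for every $x\ge0$. In general, at $i=2$ the quantity you call resultant-type equals $-(\tilde M)_{3,1}\det\tilde M^{(2)}$. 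It vanishes whenever $(\adj M)_{3,1}=0$, since that entry is never perturbed, so your first fallback (proving it nonzero) cannot work either.

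The paper's proof has the same hole: it takes the integer $x_i$ supplied by Lemma~\ref{solution_lemma} without checking that it is nonnegative. A correct version must change the algorithm, for example by searching $x=0,1,-1,2,-2,\dots$, which does terminate by Lemma~\ref{solution_lemma}. In the degenerate case, however, the perturbation found this way can be of order $c$, which breaks the Jacobsthal-based size bound used later in Lemma~\ref{inflation}.
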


\begin{proof}
It suffices to show that the algorithm will result in $\det(B_1)$ and $\det(B_2)$ coprime. In lines 4-6 we ensure that $(B_1)_{1,1}\not =(B_2)_{1,1}$, so we can always perturb both elements in the first iteration of the following \textbf{for} loop to be coprime.
We must update $(\tilde{M})_{1,1}$ and $(\tilde{M})_{i+1,i}$, unlike all other iterations, and do so in lines 11-12.
Say the total perturbation added in iteration $i$ of the \textbf{for} loop on lines 7-13 is called $x_i.$ 
When $x_i$ is added, we have the following relationship between the new and old determinants:
$$\det\Big(B_{1 \,(\text{new})}^{(i)}\Big) = \det\Big(B_{1 \,(\text{old})}^{(i)}\Big) + \det\Big(B_{1}^{(i-1)}\Big)x_i.$$
Likewise, $$\det\Big(B_{2 \,(\text{new})}^{(i)}\Big) = \det\Big(B_{2 \,(\text{old})}^{(i)}\Big) + \det\Big(B_{2}^{(i-1)}\Big)x_i.$$ 
So, it is guaranteed by the previous iteration of the \textbf{for} loop that $\det\big(B_1^{(i-1)}\big)$ and $\det\big(B_2^{(i-1)}\big)$ are coprime. By Lemma \ref{solution_lemma}, we know for every step $i=2,3,\dots, n-1$ that $x_i$ exists and thus the \textbf{while} loop on lines 8-13 will terminate. Hence, after the termination of the \textbf{for} loop, $\det(B_1)$ is coprime to $\det(B_2)$.
\end{proof}

\subsection{Integer Inflation} \label{inflation_section}
The following algebraic lemma will be useful.

\begin{lemma}\label{lemma:log-bound}
    Let $n,k,c\in\N$ with $n\geq 8$ and $k,c\geq 1$. Then $$c(nk)^{n}-n^2\log^2(cn^{n+1}k^{n}) \geq 0.$$
\end{lemma}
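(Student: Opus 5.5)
Set $N = nk \ge 8$. The plan is to bound the logarithmic term by a polynomial in $n$ and $\log(cN)$, and then compare it with $cN^n$. Since $k \ge 1$, we have
\[
\log\bigl(cn^{n+1}k^n\bigr) = \log c + (n+1)\log n + n\log k \le \log c + (n+1)\log N .
\]
Because $c, N \ge 1$, this gives $\log\bigl(cn^{n+1}k^n\bigr) \le (n+1)\log(cN)$. It therefore suffices to show
\[
n^2(n+1)^2\log^2(cN) \le cN^n .
\]
The base of the logarithm does not matter much. To be safe I will use base $2$, the worst case among the usual choices, since all the estimates below hold with room to spare.

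The next step separates the dependence on $c$. Write $\log(cN) = \log c + \log N$ and use $(u+v)^2 \le 2u^2 + 2v^2$. This leaves two inequalities to prove:
\[
2n^2(n+1)^2\log^2 c \le \tfrac12 cN^n
\quad\text{and}\quad
2n^2(n+1)^2\log^2 N \le \tfrac12 cN^n .
\]
The second holds already at $c = 1$. Indeed, $N^n \ge 8^n$, whereas $4n^2(n+1)^2\log^2 N \le 4n^2(n+1)^2 n^2\log^2 8$ whenever $N \le 8^n$. If instead $N > 8^n$, the term $\log^2 N$ is tiny compared with $N^{n}$. A cleaner way to organise this is to use $\log^2 N \le N$ for $N \ge 16$ (checking $8 \le N < 16$ directly). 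Then it suffices to have $4n^2(n+1)^2 \le N^{n-1}$, and this follows from $8^{n-1} \ge 4n^2(n+1)^2$ for all $n \ge 8$. That last fact is a one-line induction; at $n = 8$ it reads $8^7 = 2097152 \ge 4\cdot 64\cdot 81$. For the first inequality, use $\log^2 c \le c$ for $c \ge 16$, and $\log^2 c \le 16 \le c\cdot 16$ trivially for smaller $c$. It then suffices that $64\, n^2(n+1)^2 \le N^n$, which follows from $8^n \ge 64 n^2(n+1)^2$ for $n \ge 8$; at $n = 8$ this is $16777216 \ge 331776$.

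I do not expect a genuine obstacle; the work is bookkeeping. The one delicate point is handling the small-$c$ and small-$N$ ranges, where $\log^2 x \le x$ fails, namely $x \in (2,16)$ in base $2$. I would treat these with the crude constant bound $\log^2 x \le 16$, which is absorbed by the enormous slack in $8^n$ versus a degree-4 polynomial in $n$. Here the hypothesis $n \ge 8$ is exactly what makes the base-case numerics comfortable.
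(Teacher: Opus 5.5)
Your proof is correct, but it takes a different route from the paper's.

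\textbf{The paper's route.} The paper applies one estimate, $\log x < x^{1/3}$ for $x\ge 94$, directly to the whole argument $x=cn^{n+1}k^n$, which is at least $8^9$. This gives
\[
n^2\log^2\bigl(cn^{n+1}k^n\bigr) < c^{2/3}\,n^{(2n+8)/3}\,k^{2n/3}.
\]
Comparing with $c(nk)^n$ reduces the lemma to $n^{(8-n)/3}\le c^{1/3}k^{n/3}$, which holds precisely because $n\ge 8$.

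\textbf{Your route.} You first coarsen the logarithm to $(n+1)\log(cN)$ and separate the $c$- and $N$-dependence with $(u+v)^2\le 2u^2+2v^2$. You then use the weaker estimate $\log^2 x\le x$ for $x\ge 16$ in base $2$, with the crude bound $\log^2 x<16$ below that. You finish with the polynomial-versus-exponential inequalities $8^{n-1}\ge 4n^2(n+1)^2$ and $8^n\ge 64n^2(n+1)^2$.

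\textbf{What each buys.} The paper's argument is shorter and shows where the hypothesis $n\ge 8$ comes from, namely the exponent $(8-n)/3$. However, it leaves the ``algebraic manipulations'' to the reader. Yours costs a factor $(n+1)^2$ and some case-splitting, and it has much more slack than needed. In return, every step is explicit, and working in base $2$ makes it visibly independent of the base of the logarithm.

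\textbf{One clarification.} For $8\le N<16$, the inequality $\log_2^2 N\le N$ genuinely fails; for example, $N=8$ gives $9>8$. So the ``direct check'' in that range must be of the target inequality $4n^2(n+1)^2\log^2 N\le N^n$, not of $\log^2 N\le N$. That target follows from $\log_2^2 N<16$ and the same bound $64n^2(n+1)^2\le 8^n$ you use for the $c$-term, and your final paragraph suggests this is what you intend.
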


\begin{proof}
    First notice that $\log(x) < x^{1/3}$ for all $x\geq 94$ by simple calculus. Then it follows directly by algebraic manipulations that the statement holds for all $n\geq 8$ and  $k,c\geq 1$
\end{proof}

Recall Hadamard's inequality \cite{hadamard}:

\begin{theorem}[Hadamard's Inequality]\label{hadamard}
    For a matrix $M\in\Mn(\R)$ with entries bounded in magnitude by $k\geq 1$, $\lvert \det(M)\rvert\leq k^n n^{n/2}$.
\end{theorem}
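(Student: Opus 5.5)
The statement is Hadamard's inequality in the crude form $\lvert\det(M)\rvert\le k^n n^{n/2}$. I would derive it from the geometric form $\lvert\det(M)\rvert\le\prod_{j=1}^n\norm{\mb{m}_j}_2$, where $\mb{m}_1,\dots,\mb{m}_n$ are the columns of $M$. Given the geometric form, the bound on the entries finishes the argument at once: each column has $n$ entries of magnitude at most $k$, so $\norm{\mb{m}_j}_2\le\sqrt{n k^2}=k\sqrt{n}$. Taking the product over the $n$ columns gives $k^n n^{n/2}$.

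To prove the geometric form I would argue as follows.

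\textbf{Singular case.} If $\det(M)=0$ there is nothing to prove.

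\textbf{Reduction to triangular form.} Otherwise the columns are linearly independent. Apply Gram--Schmidt, or equivalently take a QR decomposition $M=QR$ with $Q$ orthogonal and $R$ upper triangular.

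\textbf{Bounding the determinant.} Then $\lvert\det(M)\rvert=\lvert\det(R)\rvert=\prod_j\lvert r_{jj}\rvert$. Here $\lvert r_{jj}\rvert=\norm{\mb{m}_j^{*}}_2$ is the length of the component of $\mb{m}_j$ orthogonal to $\mathrm{span}(\mb{m}_1,\dots,\mb{m}_{j-1})$.

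\textbf{Comparing with the columns.} Orthogonal projection does not increase Euclidean length, so $\norm{\mb{m}_j^{*}}_2\le\norm{\mb{m}_j}_2$. This yields the product bound.

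An alternative route is AM--GM on the eigenvalues of the positive semidefinite Gram matrix $M^TM$. Since $\det(M^TM)\le\big(\mathrm{tr}(M^TM)/n\big)^n\le (nk^2)^n$, we get $\lvert\det M\rvert^2\le n^nk^{2n}$. This gives the stated bound directly, without the column-wise refinement, and I would likely present it as the shortest proof.

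There is no real obstacle here. The only point requiring care is justifying that projection is norm non-increasing, or that the eigenvalues of $M^TM$ are nonnegative so that AM--GM applies. Both are standard. The hypothesis $k\ge1$ is not even needed. Since the paper cites \cite{hadamard}, a brief sketch along these lines suffices.
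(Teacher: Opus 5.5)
Your proof is correct. The paper gives no proof of this statement; it only recalls Hadamard's inequality as a classical fact with a citation, so your argument supplies a self-contained justification rather than differing from one in the text. Both of your routes work.

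The Gram--Schmidt/QR route proves the sharper column form $\lvert\det M\rvert\le\prod_j\norm{\mb{m}_j}_2$. The eigenvalue/AM--GM route gives only $\lvert\det M\rvert\le\big(\mathrm{tr}(M^TM)/n\big)^{n/2}$. This is weaker in general: applying AM--GM to the squared column norms $\norm{\mb{m}_j}_2^2$ shows the column product never exceeds it. Under the entrywise hypothesis, however, both collapse to the same bound $k^n n^{n/2}$.

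The paper only ever invokes this crude entrywise form, for entries of $M$, of minors, and of $\tilde{M}$. So your shorter eigenvalue argument is a perfectly adequate choice here. The column-wise refinement would matter only if one wanted tighter inflation estimates, for instance for the structured matrix $\tilde{M}$, whose columns have very different sizes. You are also right that $k\ge 1$ is unnecessary, since any $k\ge 0$ bounding the entries works.
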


This proves vital since we will work with the adjugate matrix whose entries are determinants of $(n-1)\times(n-1)$ minors of the original. We also need Jacobsthal's function:

\begin{definition}
    For $n\in\mathbb{N}$, \emph{Jacobsthal's function}, denoted $j(n)$, is the smallest $m\in\N$ such that every sequence of $m$ consecutive integers contains an integer coprime to $n.$
\end{definition}

This means that Jacobsthal's function gives an upper bound on the perturbation required for two elements to be coprime. Thus, Jacobsthal's function also gives an upper bound on the number of iterations of the \textbf{while} loop beginning on line 8 of Algorithm \ref{approximate}. In 1978, Iwaniec proved the following important theorem about this function \cite{jacobsthal}:

\begin{theorem}\label{jacobsthals}
    We have $j(n) = \bigo\big(\log^2(n)\big).$ 
\end{theorem}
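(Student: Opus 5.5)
Since the paper only cites this result, my plan is to reconstruct the standard sieve-theoretic argument due to Iwaniec rather than look for anything new.

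\textbf{Reduction to a combinatorial quantity.} Whether an integer is coprime to $n$ depends only on the set of primes dividing $n$. So $j(n)=j(\mathrm{rad}(n))$, and I may take $n$ squarefree with $r=\omega(n)$ prime factors $p_1<\dots<p_r$. By the Chinese remainder theorem, the worst case for a fixed $r$ is controlled by the quantity
\[
h(r)=\max_{\substack{q_1,\dots,q_r\\ \text{distinct primes}}}\ \max\{L : \text{there exist residues } a_i \text{ such that } \{1,\dots,L\}\subseteq \textstyle\bigcup_i (a_i+q_i\Z)\},
\]
and $j(n)\le h(r)+1$. Since $r\ll \log n/\log\log n$, it suffices to prove $h(r)\ll r^2\log^2 r$: this gives $j(n)\ll (\log n/\log\log n)^2(\log\log n)^2=\bigo(\log^2 n)$.

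\textbf{Sieving an interval.} Fix an interval $I$ of length $L$ and residue classes $a_i \bmod p_i$. I want to show that some element of $I$ avoids all the classes once $L\ge C r^2\log^2 r$. Choose a threshold $z$ and split the primes into $p_i\le z$ and $p_i>z$.
\begin{itemize}
\item \emph{Small primes.} Apply a lower-bound sieve (the fundamental lemma, or Rosser's $\beta$-sieve) to obtain
\[
\#\{m\in I: m\not\equiv a_i \ (p_i)\ \forall\, p_i\le z\}\ \ge\ L\prod_{p_i\le z}\Bigl(1-\tfrac1{p_i}\Bigr)\bigl(1-o(1)\bigr)-R,
\]
where the remainder $R$ is controlled by the sieve level $D$ with $D<L$.
\item \emph{Large primes.} Simply subtract: each large prime removes at most $L/p_i+1$ elements.
\end{itemize}
The worst case is when the primes are as small as possible, which makes $\prod(1-1/p)\asymp 1/\log r$. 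The ``$+1$'' terms then sum to about $r$, and the $L/p$ terms must be dominated by the main term.

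\textbf{The main obstacle.} This crude split gives only $h(r)\ll r^{1+o(1)}\cdot$(a sieve loss). The delicate step is making the large-prime contribution compatible with the small-prime main term. The $L/p$ losses from primes just above $z$ can be comparable to the survivors left after sieving by the small primes, because the subtraction ignores correlations with that sieve.

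Iwaniec's resolution is to treat the primes in dyadic blocks and apply a weighted sieve (Selberg-type weights) to all primes simultaneously. This bounds the sieve error terms by a bilinear form in the residues $a_i$, which can be estimated by a large-sieve-type inequality uniformly in the choice of the $a_i$. My first attempt would be to run the $\beta$-sieve with level $D=L^{1/2}$ and bound the remainder by $\sum_{d<D}\mu^2(d)\,3^{\omega(d)}$. I would then choose $z\approx r\log r$, so that the primes above $z$ number at most $r$ but each has reciprocal at most $1/(r\log r)$. Balancing the terms
\[
\frac{L}{\log z},\qquad r,\qquad \frac{L\, r}{z\log z},\qquad R
\]
should force $L\asymp r^2\log^2 r$, which is exactly where the square in $\log^2$ comes from.

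I expect this balancing, and in particular controlling $R$ uniformly over the adversarial residues $a_i$, to be the genuinely hard part. That is the content of Iwaniec's paper, which I would follow there.
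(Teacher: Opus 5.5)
Your reduction is fine. You have $j(n)=j(\mathrm{rad}\,n)$, the Chinese remainder theorem gives $j(n)\le h(\omega(n))+1$, and since $\omega(n)\ll\log n/\log\log n$ it suffices to prove $h(r)\ll(r\log r)^2$. The paper gives no argument for this theorem; it simply cites Iwaniec. Quoting his bound $j(n)\ll(\omega(n)\log\omega(n))^2$ as a black box would put you on the same footing.

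The trouble is the sieve reconstruction you offer in its place: its one concrete step fails, and it puts the difficulty in the wrong place.
\begin{itemize}
\item With $z\approx r\log r$, $L\asymp r^2\log^2 r$ and level $D=L^{1/2}$, you have $D\approx z$, so $s=\log D/\log z\approx 1$. That is below the sifting limit $2$ of the linear ($\beta=2$) sieve, where $f(s)=0$, so your lower bound has no main term.
\item The bound $\sum_{d<D}\mu^2(d)3^{\omega(d)}$ is a Selberg-sieve remainder. The $\beta$-sieve weights are $0,\pm1$, so its remainder is at most $\#\{d<D: d\mid P(z)\}$.
\item The four terms you balance do not force $L\asymp r^2\log^2 r$. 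Taken alone they are already satisfied by $L$ of order $r\log r$, which would be far stronger than any known bound.
\item The large primes you call the main obstacle are harmless. Take $z=Cr\log r$. Crude subtraction then costs at most $\sum_{p_i>z}(L/p_i+1)\le L/(C\log r)+r$, which is below the main term $\asymp f(s)L/\log z$ once $C$ is a large multiple of $1/f(s)$.
\end{itemize}

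The square comes from a constraint missing from your list. A lower-bound linear sieve needs $D>z^{2}$, and in your set-up $z\gg r\log r$ is forced (take the $p_i$ to be the first $r$ primes). Against adversarial residues $a_i$ you only know $|r_d|\le 1$. If you bound the remainder by $\#\{d<D: d\mid P(z)\}$, which is $\asymp D$ in that case, main term versus remainder requires $L\gg z^2\log z$. So this route gives at best $h(r)\ll r^2(\log r)^{c}$ with $c\ge 3$, hence only $j(n)\ll\log^2 n\,(\log\log n)^{c-2}$, short of the stated $O(\log^2 n)$.

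Removing those extra logarithms is exactly the content of Iwaniec's theorem. Your description of how he does it (dyadic blocks, Selberg weights, a bilinear form controlled by a large-sieve inequality) is too vague to check, and you explicitly defer it. As a proof, the proposal is therefore missing precisely the step that carries the statement. Either cite Iwaniec's result outright, as the paper does, or supply an argument that genuinely beats the trivial remainder bound.
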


This asymptotic bound is extremely useful in many contexts, but does not reflect what we expect on average. 
In fact, the probability that two random integers $a,b$ are  coprime is $6/\pi^2$ \cite{coprime}.
Given $a,b\in \N$, we can compute an upper bound on the 
expected value $E[x]$ of $x\in\N$ such that $\gcd(a+x,b)=1$:
\begin{align*}
    E[x] &\leq 0\cdot\frac{6}{\pi^2} + 1\cdot (1-\frac{6}{\pi^2})\frac{6}{\pi^2} + 2\cdot (1-\frac{6}{\pi^2})^2\frac{6}{\pi^2}+\cdots\\
    & = \frac{6}{\pi^2}\sum_{i=1}^\infty i\cdot(1-\frac{6}{\pi^2})^i.
\end{align*}
By taking the derivative of a geometric series and multiplying by $\left(1-6/\pi^2\right)$, we find this is equal to $$\dfrac{6}{\pi^2}\cdot\dfrac{1-\frac{6}{\pi^2}}{(\frac{6}{\pi^2})^2} = \dfrac{\pi^2}{6}-1 \approx 0.645.$$
Experimental data shows that $x$ is 0 about 60.8\% of the time, 1 about 28.6\% of the time, and 2 about 4.2\% of the time. On average, $x\approx0.576$.

Let us now consider the integer inflation throughout Algorithm \ref{approximate}.

\begin{lemma}\label{inflation}
    Let $k$ and $M$ be as in the input of Algorithm \ref{approximate} and $c\in \N$ with $c\ge 1$. Then, the magnitudes of the entries of the matrix $\tilde{M}$ in Algorithm  \ref{approximate} are bounded above by $c(2nk)^{n}$ at every step of the algorithm.
\end{lemma}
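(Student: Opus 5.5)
We read the constant $c$ in the statement as the scalar multiplying $\adj M$ on line 1 of Algorithm \ref{approximate}, kept general with $c\ge 1$. The plan is to bound $\tilde{M}$ at initialization, then bound the total amount added by all perturbations, and check that the sum stays below $c(2nk)^n$. Since the algorithm only ever increases entries by $1$, the bound at the end also holds at every intermediate step.

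\textbf{Initialization.} Each entry of $\adj M$ is, up to sign, the determinant of an $(n-1)\times(n-1)$ submatrix of $M$, with entries bounded by $k$. By Theorem \ref{hadamard}, the entries of $c\cdot\adj M$ are at most $c\,k^{n-1}(n-1)^{(n-1)/2}\le c(nk)^{n-1}$.

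\textbf{Which entries move.} Only three kinds of update occur:
\begin{itemize}
\item lines 4--6 add at most $1$ to $(\tilde M)_{2,1}$;
\item each iteration $i$ of the \textbf{for} loop adds its total perturbation $x_i$ to $(\tilde M)_{i+1,i}$;
\item when $i=1$, the same $x_1$ is also added to $(\tilde M)_{1,1}$.
\end{itemize}
These positions are distinct for different $i$, apart from $(2,1)$, which receives both the line-6 increment and $x_1$. So every entry of $\tilde M$ is perturbed by at most $1+\max_i x_i$. It therefore suffices to show $x_i\le c(nk)^n$, and then to use
$$c(nk)^{n-1}+1+c(nk)^n\le c(2nk)^n,$$
which holds trivially since $2^n\ge 3$.

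\textbf{Bounding $x_i$.} Follow the proof of Lemma \ref{solution_lemma}. Write $a,c'$ for the old values of $\det(B_1^{(i)}),\det(B_2^{(i)})$ and $b,d$ for $\det(B_1^{(i-1)}),\det(B_2^{(i-1)})$. The loop stops once $s a+tc'+x$ is coprime to $N:=da-bc'$. Because the loop increments $x$ by one each time, $x_i$ is bounded by the Jacobsthal function $j(|N|)$. For $i=1$ the same reasoning applies to the two distinct numbers $(B_1)_{1,1}$ and $(B_2)_{1,1}$, using the difference of these two entries as $N$.

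To estimate $|N|$, argue by induction over the steps of the algorithm. Assume all entries of $\tilde M$ are at most $c(2nk)^n$ so far. Then Theorem \ref{hadamard} bounds $|a|,|b|,|c'|,|d|$ by $(c(2nk)^n)^n n^{n/2}$, so $|N|\le 2(c(2nk)^n)^{2n}n^{n}$. This gives
$$\log|N|=\bigo\big(n\log(c\,n^{n+1}k^n)\big),$$
and hence, via Theorem \ref{jacobsthals}, $j(|N|)=\bigo\big(n^2\log^2(cn^{n+1}k^n)\big)$. Lemma \ref{lemma:log-bound} is exactly tailored to show this is at most $c(nk)^n$ for $n\ge 8$. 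Feeding this back closes the induction.

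\textbf{Main obstacle.} Theorem \ref{jacobsthals} is only asymptotic, so an explicit constant is needed for the last comparison. I would first try to absorb that constant into the slack provided by the factor $2^n$ and by the $\log^2$ versus $(nk)^n$ gap in Lemma \ref{lemma:log-bound}. Failing that, I would use the crude explicit bound $j(m)\le 2^{\omega(m)}$ together with $\omega(m)\le\log_2 m$.

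A second, smaller issue is the apparent circularity: the determinant bounds depend on entries that the loop is still modifying. I would handle this by running the induction step by step, using the fact that the \emph{a priori} bound $c(2nk)^n$ is far larger than anything the loop can add before it terminates.
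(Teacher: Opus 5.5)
Your route is genuinely different from the paper's, but it shares a real gap with the paper, and on some inputs that gap makes the statement false.

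\textbf{How your route differs.} The paper carries a single running bound $s_i$ on all entries and argues $s_i\le s_{i-1}+n^2\log^2(s_{i-1}n)\le 2s_{i-1}$, so the factor $2^n$ in $c(2nk)^n$ is consumed by $n-1$ doublings. You instead observe that iteration $1$ touches only $(1,1)$ and $(2,1)$, and iteration $i\ge 2$ touches only $(i+1,i)$. So nothing compounds, each entry moves by at most $1+\max_i x_i$, and $2^n$ becomes pure slack. You also bound $x_i$ through the correct modulus $N=\det(B_2^{(i-1)})\det(B_1^{(i)})-\det(B_1^{(i-1)})\det(B_2^{(i)})$ from the proof of Lemma~\ref{solution_lemma}. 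The paper instead applies Jacobsthal's function to ``the determinants'' without saying what must be coprime to what. Your circularity worry is harmless: $N$ is unchanged by the increments, so it is fixed during the \textbf{while} loop, and a plain induction on $i$ works.

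\textbf{The gap: $x_i\le j(|N|)$ needs $N\neq 0$.} Lines 4--6 secure this only for $i=1$; for $i\ge 2$ nothing does. When $N=0$ the loop stops only at $|sa+tc'+x|=1$, which an upward search from $x=0$ need never reach. Lemma~\ref{solution_lemma} permits negative $x$; the \textbf{while} loop does not. This actually happens:
\begin{itemize}
\item Take $n=8$, $k=1$, and $M=P$ the permutation matrix of $(1\,4)(2\,3)$. Then $\adj P=P$ and $\tilde M=cP$ with $c\ge 2$ (e.g.\ the value on line 1).
\item Lines 4--6 set $(\tilde M)_{2,1}=1$, and iteration $i=1$ passes since $\gcd(1,0)=1$.
\item At $i=2$ one gets $\det(B_1^{(2)})=c+x$ and $\det(B_2^{(2)})=0$, so the $\gcd$ is $c+x>1$ forever and $(\tilde M)_{3,2}$ grows without bound.
\end{itemize}
So the bound $c(2nk)^n$ ``at every step'' fails for this input, and so does Proposition~\ref{approximate_prop}. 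Neither your argument nor the paper's can be completed unless the algorithm is changed or the input restricted so that $N\neq 0$ at every stage.

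\textbf{The constant you flag.} Theorem~\ref{jacobsthals} does hide a constant, and the paper silently sets it to $1$. Your fallback does not help. The bound $j(m)\le 2^{\omega(m)}$ together with $\omega(m)\le\log_2 m$ only yields $j(m)\le m$. That is useless when $|N|$ is of size $(c(2nk)^n)^{2n}$. Even $\omega(m)=O(\log m/\log\log m)$ does not suffice in general, because $\log|N|$ is about $2n\log\big(c(2nk)^n\big)$. Absorbing the constant into slack therefore requires an explicit form of Iwaniec's bound. With the $c$ of line 1 the slack would then be ample. But the statement allows any $c\ge 1$, and for $c=k=1$, $n=8$ there is little room to absorb an unspecified constant.
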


\begin{proof}
    By using the Bariess algorithm \cite{bariess} and Hadamard's inequality, we can ensure that the magnitude of entries of $\adj M$ are bounded above by $k^{n-1}(n-1)^{(n-1)/2}$ at every intermediate step of the computation. So after line 1, the magnitudes of the entries of $\tilde{M} = c\cdot\adj M$ are  bounded above by $ck^nn^{n/2} \leq c(nk)^n.$ 
    Let $s_i$ be an upper bound on the magnitude of the entries of $\tilde{M}$ after iteration $i$ of the \textbf{for} loop on lines 7-13. Then $s_0 = c(nk)^n.$

    In any iteration $i=1,2,\dots, n-1$, we take determinants of appropriately sized minors and then perturb them to be coprime.
    By Hadamard's inequality, the determinants prior to perturbation have magnitude bounded above by $(s_{i-1}n)^n$. Since Jacobsthal's function gives an upper bound on the size of the perturbation necessary, we can apply Theorem \ref{jacobsthals} to get that the magnitude of the perturbation applied in iteration $i$ is $\bigo\big(\log^2\big((s_{i-1}n)^n\big)\big)$.
    So, $$s_i\leq s_{i-1}+n^2\log^2(s_{i-1}n).$$
    
    We can see for all $i=1,2,\dots, n-1$, that $s_{i-1}\leq s_i$ since we are adding positive values to entries at every iteration.
    Since, by Lemma \ref{lemma:log-bound}, $n^2\log^2(s_0n)\leq s_0$ for all $n\geq 2,\, k,c\geq 1$, and $s_{i-1}\leq s_i$, then $n^2\log^2(s_in)\leq s_i$ for $i=1,2,\dots, n-1$.
    Hence, for each $i=1,2,\dots, n-1,$ $s_i\leq 2s_{i-1}.$

    At the termination of the \textbf{for} loop, the magnitude of the entries are bounded above by \begin{equation*}s_{n-1}\leq 2^{n-1}s_0=2^{n-1}c(nk)^{n}\leq c(2nk)^{n}.\qedhere\end{equation*}
\end{proof}

\begin{theorem}\label{overall_inflation}
    Let $k$ and $M$ be as in the input of Algorithm \ref{approximate}, $\tilde{M}$ be as in the output of Algorithm \ref{approximate}, and $c$ be the multiplier used in line 1. The bitlength of the entries of the vector output from Algorithm  \ref{approximate} is $\mathcal{O}\big(n^2\log(nk)\big)$.
\end{theorem}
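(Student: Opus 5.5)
The output vector is $\mb{x}=\tilde{M}^{-1}(b_1,b_2,0,\dots,0)^T$. I will write it via Cramer's rule as
$$\mb{x}=\frac{1}{\det\tilde{M}}\,\adj(\tilde{M})\,(b_1,b_2,0,\dots,0)^T.$$
Its entries are then rationals with common denominator $\det\tilde{M}$. Each numerator has the form $(\adj\tilde{M})_{j,1}b_1+(\adj\tilde{M})_{j,2}b_2$. So it suffices to bound the bitlengths of $\det\tilde{M}$, of the entries of $\adj\tilde{M}$, and of $b_1,b_2$, each by $\bigo\big(n^2\log(nk)\big)$.

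First I take $c=1728(nk)^{3n+15}$ from line 1. Then $\log c=\bigo(n\log(nk))$. By Lemma \ref{inflation}, every entry of $\tilde{M}$ has magnitude at most $s:=c(2nk)^n$, so $\log s=\bigo(n\log(nk))$. Hadamard's inequality (Theorem \ref{hadamard}) then gives
$$|\det\tilde{M}|\le s^n n^{n/2}.$$
Its logarithm is $n\log s+\tfrac{n}{2}\log n=\bigo\big(n^2\log(nk)\big)$. The same bound, with $n-1$ in place of $n$, holds for every entry of $\adj\tilde{M}$, since each is an $(n-1)\times(n-1)$ minor.

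Next come the B\'ezout coefficients $b_1,b_2$. The algorithm enforces $|b_1|\le|\det B_2|$, and $B_2$ is an $(n-1)\times(n-1)$ submatrix of $\tilde{M}$. Hadamard therefore bounds $\log|b_1|$ by $\bigo(n^2\log(nk))$. From $\det(B_1)b_1+\det(B_2)b_2=1$ we get
$$|b_2|\le\frac{1+|\det B_1||b_1|}{|\det B_2|}\le 1+|\det B_1|,$$
which has the same order. The numerators are sums of two products of such quantities, so their logs are again $\bigo(n^2\log(nk))$, and that gives the claim.

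The main obstacle I expect is bookkeeping rather than ideas. I must check that the constant $c$ really contributes only $\bigo(n\log(nk))$ per entry, and that the Jacobsthal perturbations are already absorbed by Lemma \ref{inflation}. I also need to phrase bitlength of a rational entry as the sum of numerator and denominator bitlengths, possibly after reduction, which can only shrink it. A secondary point is that $\det\tilde{M}\neq0$, so that $\tilde{M}^{-1}$ exists. I would justify this from the construction: $\tilde{M}$ is a small perturbation of $c\,\adj M$, and $\det(c\,\adj M)=c^n\det(M)^{n-1}\neq0$ dominates the perturbation. Alternatively, I would simply take it as implicit in Proposition \ref{approximate_prop}.
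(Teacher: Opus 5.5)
Your proposal is correct and follows essentially the same route as the paper. Both arguments bound the entries of $\tilde{M}$ by Lemma \ref{inflation}, apply Hadamard's inequality to $\tilde{M}^{-1}$ and to the B\'ezout coefficients $b_1,b_2$, and then combine these bounds. Writing $\tilde{M}^{-1}=\adj(\tilde{M})/\det\tilde{M}$ explicitly is a useful refinement: the paper bounds only the magnitudes of the entries of $\tilde{M}^{-1}$ and of the output vector, but the bitlength of a rational entry also needs your bound on the common denominator $\det\tilde{M}$.
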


\begin{proof}
    By Lemma \ref{inflation}, we know that the magnitude of the entries of $\tilde{M}$ are bounded above by $c(2nk)^n=1728(nk)^{3n+15}(2nk)^n\le1728(2nk)^{4n+15}$
    
    By Hadamard's inequality, the entries of $\tilde{M}^{-1}$ are bounded above by $$(1728(2nk)^{4n+15})^n n^{n/2}\leq 1728^n(2nk)^{4n^2+16n}.$$
    Line 14 and Hadamard's inequality gives 
    $$|b_1|\le |\det(B_1)|\le( 1728(2nk)^{4n+15})^nn^{n/2}\le1728^n(2nk)^{4n^2+16n}.$$ 
    The relationship on line 14 gives, $|b_2|\le1728^n(2nk)^{4n^2+16n}.$
    
    Putting these together, the magnitude of the entries of the output vector will be at most 
    $$2\big( 1728^n(2nk)^{4n^2+16n} \cdot 1728^n(2nk)^{4n^2+16n}\big)\le1728^{2n}(2nk)^{8n^2+32n+1}$$ since only two entries of $(b_1,b_2,0,\dots,0)^T$ are nonzero.
    So, the bitlength of the entries of the output vector is $\bigo\big(n^2\log(nk)\big).$
\end{proof}

This bound may be possible to improve in the average since it is likely that the perturbations $a_{i,j}$ will not often achieve the worst case bound. In fact, experiments show that $a_{i,j}$ is 0 about 55\% of the time, 1 about 31\% of the time, 2 about 10\% of the time, and 3 about 9\% of the time. On average, $a_{i,j}\approx0.92$.
Taking the adjugate at the beginning and the inverse at the end both inflate the magnitude of entries by a power of $n$. Without some method of avoiding these steps, even if average case analysis were completed, the bitlength of the entries of the output vector  would be at best $\bigo\big(n^2\log(k)\big)$, which is not significantly less than Theorem \ref{overall_inflation}.

\subsection{Gap-Preservation}
 We start by deriving bounds on the operator norm of a scaled version of $M\tilde{M}.$ 

\begin{definition}
    For a linear operator $M$, the \emph{operator norm} of $M$, denoted as  $\opnorm{M}$, is equal to $\displaystyle\smash{\max_{\norm{\mb{u}}=1}} \norm{M\mb{u}}$.
\end{definition}

\begin{lemma}\label{opnorm_prop}
    Let $c\in \N$, $k$ and $M$ be as in the input of Algorithm \ref{approximate}, and $\tilde{M}=c\cdot\adj M +A$ be as in the output of Algorithm \ref{approximate}. Then,
    $$1-\frac{kn^4\log^2(2nk)+kn^2\log^2(c)}{|c\cdot\det M|}\leq \norm{I_n+\frac{MA}{c\cdot\det M}} \leq 1+\frac{kn^4\log^2(2nk)+kn^2\log^2(c)}{|c\cdot\det M|}.$$
\end{lemma}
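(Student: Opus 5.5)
The statement concerns the operator norm $\opnorm{I_n+\frac{MA}{c\cdot\det M}}$; the $\norm{\cdot}$ in the display should be read this way. The plan is a triangle-inequality argument: establish
$$\opnorm{I_n}-\opnorm{E}\le \opnorm{I_n+E}\le \opnorm{I_n}+\opnorm{E},\qquad E=\frac{MA}{c\cdot\det M},$$
and then bound $\opnorm{E}\le \frac{\opnorm{MA}}{|c\cdot\det M|}$. Since $\opnorm{I_n}=1$, everything reduces to showing $\opnorm{MA}\le kn^4\log^2(2nk)+kn^2\log^2(c)$.

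\textbf{Step 1: bound the perturbation matrix $A$.} By construction, $A=\tilde M-c\cdot\adj M$. Its nonzero entries are exactly the accumulated perturbations added in lines 5--6 and 9--12 of Algorithm \ref{approximate}; these sit in positions $(1,1)$ and $(i+1,i)$. The \textbf{for} loop is indexed by $i=1,\dots,n-1$, so the column-$i$ perturbation of $A$ equals the total perturbation $x_i$ added in iteration $i$ (plus at most $1$ from lines 5--6). Following the proof of Lemma \ref{inflation}, $x_i$ is at most Jacobsthal's function of a determinant of magnitude at most $(s_{i-1}n)^n$. Lemma \ref{inflation} also gives $s_{i-1}\le c(2nk)^n$. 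Theorem \ref{jacobsthals} then yields
$$|x_i|\lesssim \log^2\!\big((c(2nk)^n n)^n\big)=n^2\log^2\!\big(c\,n(2nk)^n\big)\lesssim n^2\big(n^2\log^2(2nk)+\log^2 c\big),$$
where the last step uses $(\alpha+\beta)^2\le 2\alpha^2+2\beta^2$. Absorbing constants, as the paper does, this gives an entrywise bound on $A$ of order $n^4\log^2(2nk)+n^2\log^2(c)$.

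\textbf{Step 2: bound $MA$.} Each column of $A$ has at most two nonzero entries (column $1$ has entries in rows $1$ and $2$), and the remaining columns have at most one. Hence each column of $MA$ is a combination of at most two columns of $M$, whose entries are bounded by $k$. So each entry of $MA$ is at most $k\cdot\max|a_{ij}|$, up to a factor of $2$.

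\textbf{Step 3: pass from entries to the operator norm.} For a general $\ell_p$ norm this costs a dimension factor. I would exploit the sparsity of $A$: $A$ acts essentially as a weighted shift, so $\opnorm{A}\le 2\max|a_{ij}|$ in every $\ell_p$. Then $\opnorm{MA}\le\opnorm{M}\opnorm{A}$, though $\opnorm{M}$ can be as large as $nk$. The extra factor of $n$ would have to be absorbed into the slack of Step 1. That slack comes from the crude use of the bound in Theorem \ref{jacobsthals}: the logarithm really scales as $n\log(2nk)$ rather than $n^2$, which leaves room.

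This bookkeeping of constants and powers of $n$ is the main obstacle. The stated bound carries no explicit constant, and matching exactly $kn^4\log^2(2nk)+kn^2\log^2(c)$ requires being careful about which factors of $n$ are absorbed, together with the implicit constant from Theorem \ref{jacobsthals}.
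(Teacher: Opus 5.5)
Your strategy is the paper's: triangle inequality, an entrywise bound on $A$ from Lemma \ref{inflation} and Theorem \ref{jacobsthals}, sparsity of $A$, then passage from entries to a norm. Steps 1 and 2 are sound up to constants, and your column count for $A$ is correct. However, the argument does not reach the displayed inequality, and the step meant to close it is wrong. Chaining your steps gives $\max|a_{ij}|\lesssim n^4\log^2(2nk)+n^2\log^2(c)$ and $\opnorm{MA}\le\opnorm{M}\opnorm{A}\le 2nk\max|a_{ij}|$. Hence you only get $\opnorm{MA}\lesssim kn^5\log^2(2nk)+kn^3\log^2(c)$, which is one factor of $n$ too many, besides the unnamed constants.

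There is no slack in Step 1 to absorb that factor. The integer you feed to Theorem \ref{jacobsthals} is a determinant bounded only by $(s_{i-1}n)^n$. Its logarithm is of order $n^2\log(2nk)+n\log(c)$, not $n\log(2nk)$. That is exactly your own computation, and squaring it is where your $n^4$ came from. The factor $n$ on the other side is not removable in general either: $\opnorm{M}\le nk$ is sharp in $\ell_1$ (take a column of $M$ equal to $k$ times the all-ones vector). So the last paragraph of Step 3 asserts something false. What you have actually proved is the lemma with an extra factor $n$ and an absolute constant.

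The paper obtains exactly $kn^4\log^2(2nk)+kn^2\log^2(c)$ through different bookkeeping. It bounds each entry of $A$ by $\log^2$ of the \emph{entry} bound $c(2nk)^n$ from Lemma \ref{inflation}, not of the determinant size. It writes this as $n^2\log^2(2nk)+\log^2(c)$, taking the constant in Theorem \ref{jacobsthals} to be $1$ and dropping the cross term. It then multiplies by $k$ for the entries of $MA$ and spends $n^2$ in $\norm{MA\mb{u}}\le n^2\max_{i,j}|(MA)_{i,j}|$. So the paper's $n^4$ is $n^2$ from the logarithm times $n^2$ from the entries-to-norm step, whereas yours is $n^4$ from the logarithm alone.

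Your Step 1 is the accounting consistent with the per-iteration perturbation $n^2\log^2(s_{i-1}n)$ in the proof of Lemma \ref{inflation}, so the mismatch you hit is real. To finish, you must do one of two things:
\begin{itemize}
\item take the paper's entrywise bound on $A$ as given; or
\item state the weaker bound honestly and re-check that the $c$ from line 1 still meets the sufficient condition in Proposition \ref{gap-preservation}.
\end{itemize}
What you cannot do is claim the extra $n$ is absorbed.

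Separately, prove the lower bound pointwise, as the paper does: $\norm{(I_n+E)\mb{u}}\ge 1-\opnorm{E}$ for every unit $\mb{u}$. That is the form Proposition \ref{gap-preservation} uses to bound $\norm{M\tilde{M}\mb{x}}$ from below. A lower bound on $\opnorm{I_n+E}$ alone would be useless there, and your reverse triangle inequality gives the pointwise version at no extra cost.
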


\begin{proof}
    Let $\mb{u}$ be such that $\norm{\mb{u}}=1.$  Then
    
    $$
    \norm{\left(I_n+\frac{MA}{c\cdot\det M}\right)\mb{u}} = \norm{I_n\mb{u}+\frac{MA}{c\cdot\det M}\mb{u}}
    \leq 1 + \frac{\norm{MA\mb{u}}}{|c\cdot\det M|}.
    $$
    Lemma \ref{inflation} gives that the entries of $\tilde{M}$ are bounded above by $c(2nk)^n$. We know that the entries of $A$ are the perturbations added  to $\tilde{M}$ throughout Algorithm \ref{approximate} and, by Theorem \ref{jacobsthals}, these are bounded above by $\log^2(c(2nk)^n) = n^2\log^2(2nk)+\log^2(c)$.
    So, the entries of $MA$ are bounded above by $k\big(n^2\log^2(2nk)+\log^2(c)\big)$, because there is at most one nonzero entry in each row of $A$ by construction.
    Thus, $$\norm{MA\mb{u}}\leq n^2\max_{i,j}|(MA)_{i,j}|\leq n^2k\big(n^2\log^2(2nk)+\log^2(c)\big)$$
    
    Hence we have $$
    \norm{\left(I_n+\frac{MA}{c\cdot\det M}\right)\mb{u}} \leq 1+\frac{kn^4\log^2(2nk)+kn^2\log^2(c)}{|c\cdot\det M|}.
    $$
    Likewise, we see that \begin{equation*}1-\frac{kn^4\log^2(2nk)+kn^2\log^2(c)}{|c\cdot\det M|}\leq \norm{\left(I_n+\frac{MA}{c\cdot\det M}\right)\mb{u}} .\qedhere\end{equation*}
\end{proof}

The following proposition gives a general tool for showing gap-preservation of the reductions in Section \ref{reductions_section}.
Following the methodology laid out by Martin in Theorem 4.7 of \cite{daniel}, we first consider how much multiplication by $M\tilde{M}$ can inflate the gap without invalidating the output and then show that the necessary condition is already met. The argument below can be generalized to hold for any $\ell_p$-norm by changing the squares and roots to be $p^\text{th}$ powers and roots appropriately and choosing $c$ in terms of $p$. We restrict to the more typical $\ell_1,\ell_2,$ and $\ell_\infty$-norms for clarity.

\begin{proposition}\label{gap-preservation}
    Let $c\in\N$, $k$ and $M$ be as in the input of Algorithm \ref{approximate}, $\tilde{M}=c\cdot\adj M +A$ be as in the output of Algorithm \ref{approximate}, and $\gamma=a/b$ be such that $a,b\in N$ with $k\ge a\ge b$. Also, let $\alpha=t/d\in \Q$ be such that $k\ge d$ and $\alpha\leq |\det M|$ and let $\mb{y}\in\mathbb{X}\subset\Z^n$. Then, 
    under the $\ell_1,\ell_2,$ or $\ell_\infty$-norm, if $\norm{\mb{y}}\le \gamma\norm{\mb{x}}$ for all $\mb{x}\in\mathbb{X}$, then $\norm{M\tilde{M}\mb{y}}\le \gamma\norm{M\tilde{M}\mb{x}}$ for all $\mb{x}\in\mathbb{X}.$
\end{proposition}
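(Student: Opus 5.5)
We write $M\tilde{M}=c\det(M)\big(I_n+\frac{MA}{c\det M}\big)$. The scalar $c\det M$ scales all norms equally, so it suffices to compare $\norm{N\mb{y}}$ with $\norm{N\mb{x}}$, where $N=I_n+\frac{MA}{c\det M}$. Set $\varepsilon=\frac{kn^4\log^2(2nk)+kn^2\log^2(c)}{|c\det M|}$. Applying Lemma \ref{opnorm_prop} to $\mb{y}/\norm{\mb{y}}$ and $\mb{x}/\norm{\mb{x}}$ gives
$$(1-\varepsilon)\norm{\mb{z}}\le\norm{N\mb{z}}\le(1+\varepsilon)\norm{\mb{z}}\qquad\text{for every }\mb{z}.$$
Hence $\norm{N\mb{y}}\le(1+\varepsilon)\norm{\mb{y}}\le(1+\varepsilon)\gamma\norm{\mb{x}}\le\frac{1+\varepsilon}{1-\varepsilon}\gamma\norm{N\mb{x}}$. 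This factor $\frac{1+\varepsilon}{1-\varepsilon}$ exceeds $1$, so this estimate alone cannot give the conclusion. The plan is to exploit integrality, in the spirit of Martin's Theorem 4.7.

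The extra ingredient is that $\mb{x},\mb{y}\in\Z^n$, so under the $\ell_1$ and $\ell_\infty$-norms $\norm{\mb{y}}$ and $\norm{\mb{x}}$ are integers, and under $\ell_2$ their squares are. There are two cases. If $\norm{\mb{y}}=\gamma\norm{\mb{x}}$ exactly, the slack is zero and the argument must use more than the operator norm; I return to this below. If the inequality is strict, then in the $\ell_1$/$\ell_\infty$ case, writing $\gamma=a/b$, we get $b\norm{\mb{y}}<a\norm{\mb{x}}$ with both sides integers, so $b\norm{\mb{y}}\le a\norm{\mb{x}}-1$. This gives the quantitative gap
$$\gamma\norm{\mb{x}}-\norm{\mb{y}}\ge\frac1b\ge\frac1k.$$
For $\ell_2$ I would square first: $b^2\norm{\mb{y}}^2\le a^2\norm{\mb{x}}^2-1$. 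To have absolute sizes to compare, the norms of the relevant vectors must be bounded above. I would restrict to $\norm{\mb{x}}\le$ something like $|\det M|\sqrt n$ or $\alpha$-related; this is where the hypotheses $\alpha\le|\det M|$ and $k\ge d$ enter, presumably as a bound on the size of the candidate vectors in $\mathbb{X}$.

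The required inequality is then $(1+\varepsilon)\norm{\mb{y}}\le\gamma(1-\varepsilon)\norm{\mb{x}}$, which follows if $\varepsilon(\norm{\mb{y}}+\gamma\norm{\mb{x}})\le\frac1b$. Since $\norm{\mb{y}}\le\gamma\norm{\mb{x}}$, this reduces to $2\varepsilon\gamma\norm{\mb{x}}\le 1/k$. With $\gamma\le k$ and $\norm{\mb{x}}$ bounded polynomially in $n$, $k$ and $|\det M|$, it suffices that
$$|c\det M|\ge 2k^2\norm{\mb{x}}\big(kn^4\log^2(2nk)+kn^2\log^2 c\big).$$
By Hadamard's inequality $|\det M|\le k^nn^{n/2}$, so any factor of $\det M$ appearing in the bound on $\norm{\mb{x}}$ cancels against the $\det M$ in the denominator of $\varepsilon$. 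The remaining polynomial factors in $nk$ are absorbed by $c=1728(nk)^{3n+15}$, using Lemma \ref{lemma:log-bound}-type estimates to control $\log^2 c=O(n^2\log^2(nk))$. The constant $1728=12^3$ suggests the $\ell_2$ case cubes some constant.

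The main obstacle is the equality and boundary case, together with pinning down exactly which norm bound on elements of $\mathbb{X}$ the hypotheses on $\alpha$ supply. First I would try to interpret $\alpha$ as the bound $\norm{\mb{x}}\le\alpha\le|\det M|$, which makes the $\det M$ cancellation above work cleanly. Failing that, I would bound $\norm{\mb{x}}$ through the structure of $\mathbb{X}$ in the later reductions, where $\mb{x}=\tilde{M}^{-1}(\cdot)$ has entries of size at most $|\det\tilde{M}|$.
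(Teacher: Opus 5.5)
Your distortion estimate $\norm{N\mb{y}}\le\frac{1+\varepsilon}{1-\varepsilon}\gamma\norm{N\mb{x}}$ matches the paper's. The gap is in where you apply integrality. Integrality of $\norm{\mb{y}}$ and $\norm{\mb{x}}$ in the domain only gives slack when $\norm{\mb{y}}<\gamma\norm{\mb{x}}$ strictly. The equality case, which you flag and never resolve, cannot be closed from the domain side at all: take $\gamma=1$ and a tie $\norm{\mb{y}}=\norm{\mb{x}}$, and nothing prevents the near-isometry $N$ from making $\norm{N\mb{y}}$ slightly larger than $\norm{N\mb{x}}$.

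Your reading of $\alpha$ as a bound on domain norms is also not what the argument uses. In the paper, $\alpha$ is implicitly the optimum on the image side, $\alpha=\min_{\mb{x}\in\mathbb{X}}\norm{M\tilde{M}\mb{x}}$. In the three reductions this is $\lambda_1$, $\lambda_n$, or the CVP distance, and $\alpha\le|\det M|$ because $\det(M)\Z^n\subseteq M\Z^n$. In those reductions $\mathbb{X}$ is the SA lattice, whose vectors are rational; the relevant ones have norm about $\alpha/|c\det M|$. So domain-side integrality is not even available there. What is integral is $M\tilde{M}\mathbb{X}\subseteq M\Z^n$.

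The missing idea is to apply integrality on the image side, against $\alpha$. The quantity $\norm{M\tilde{M}\mb{y}}^2$ is an integer (in the CVP application, after multiplying by $d^2$). No such value lies strictly between $(\gamma\alpha)^2$ and $(a^2d^2\alpha^2+1)/(bd)^2$. So it suffices to prove the strict inequality $\norm{M\tilde{M}\mb{y}}<\gamma\alpha\cdot\frac{\sqrt{a^2d^2\alpha^2+1}}{ad\alpha}$. That forces $\norm{M\tilde{M}\mb{y}}\le\gamma\alpha\le\gamma\norm{M\tilde{M}\mb{x}}$ for every $\mb{x}\in\mathbb{X}$.

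To get the strict inequality, apply the hypothesis to the preimage $\mb{x}^*$ of the image optimum. Your distortion bound then gives $\norm{M\tilde{M}\mb{y}}\le\frac{1+\varepsilon}{1-\varepsilon}\gamma\alpha$. Since $\sqrt{u^2+1}/u$ is decreasing and $\alpha\le|\det M|$, it is enough that $\frac{1+\varepsilon}{1-\varepsilon}<\frac{\sqrt{a^2d^2|\det M|^2+1}}{ad|\det M|}$. No slack in the domain is needed, so the equality case never arises.

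Now set $X=ad|\det M|$ and use $\frac{\sqrt{X^2+1}+X}{\sqrt{X^2+1}-X}<6X^2$, Hadamard's inequality, and $a,d\le k$. This turns the condition into an explicit lower bound on $c$, which $c=1728(nk)^{3n+15}$ satisfies. The constant $1728=12^3$ comes from absorbing the $\log^2 c$ term via $\log^2 c\le c^{2/3}$, not from the $\ell_2$ case.
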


\begin{proof} 
    Since $\smash{\norm{M\tilde{M}\mb{y}}^2}$ is an integer under the $\ell_1,\ell_2$ or $\ell_\infty$-norm, $\smash{\norm{M\tilde{M}\mb{y}}^2}\leq (\alpha a/b)^2$ is satisfied if $\norm{M\tilde{M}\mb{y}}^2<((ad\alpha)^2+1)/(bd)^2$ is satisfied since there are no integers strictly between $((ad\alpha)^2+1)/(bd)^2$ and $(\alpha a/b)^2$. 
    Then, for some $B\in\Mn(\Z)$, if $$\norm{B\mb{y}}< \gamma\dfrac{\sqrt{a^2d^2\alpha^2 + 1}}{ad\alpha}\norm{B\mb{x}}$$ for all $\mb{x}\in\mathbb{X}$ then $\norm{B\mb{y}}\leq \gamma\norm{B\mb{x}}$ for all $\mb{x}\in\mathbb{X}$.

    So we aim to show that 
    $$\norm{M\tilde{M}\mb{y}}<\gamma\dfrac{\sqrt{a^2d^2\alpha^2 + 1}}{ad\alpha}\norm{M\tilde{M}\mb{x}},$$
    or equivalently,
    $$\frac{\norm{M\tilde{M}\mb{y}}}{\norm{M\tilde{M}\mb{x}}}< \gamma\dfrac{\sqrt{a^2d^2\alpha^2 + 1}}{ad\alpha}.$$

    The multiplier on the right hand side can be bounded below using the hypothesis that $\alpha\le |\det M|$ to get 
    $$\dfrac{\sqrt{a^2d^2\alpha^2 + 1}}{ad\alpha}\ge \dfrac{\sqrt{a^2d^2|\det M|^2 + 1}}{ad|\det M|}.$$

    Since scaling will not affect the ratio of the norms on the left hand side, we consider the operator $I_n+\frac{MA}{c\cdot\det M}$ instead of $M\tilde{M}$.
    
    Applying the bounds from Lemma \ref{opnorm_prop} and the hypothesis that $\norm{\mb{y}}\le \gamma\norm{\mb{x}}$, we get a sufficient condition of 
    $$\frac{1+\frac{kn^4\log^2(2nk)+kn^2\log^2(c)}{|c\cdot\det M|}}{1-\frac{kn^4\log^2(2nk)+kn^2\log^2(c)}{|c\cdot\det M|}}< \dfrac{\sqrt{a^2d^2|\det M|^2 + 1}}{ad|\det M|}.$$

    Rearranging, we get
    
$$\frac{kn^4\log^2(2nk) + kn^2\log^2(c)}{|\det M|}\cdot\frac{\sqrt{a^2d^2|\det M|^2 + 1} + ad|\det M|}{\sqrt{a^2d^2|\det M|^2 + 1} - ad|\det M|} < c.$$

Notice that $$\frac{\sqrt{a^2d^2|\det M|^2 + 1} + ad|\det M|}{\sqrt{a^2d^2|\det M|^2 + 1} - ad|\det M|} < 6a^2d^2|\det M|^2,$$ so it is sufficient for $$6a^2d^2|\det M|kn^4\log^2(2nk) + 6a^2d^2|\det M|kn^2\log^2(c) < c.$$

The left hand side can be bounded above using Hadamard's inequality and the hypotheses that $a,d\le k$ to get
$$6k^{n+5}n^{n+4}\log^2(2nk) + 6k^{n+5}n^{n+2}\log^2(c) < c.$$

We see that $$6k^{n+5}n^{n+4}\log^2(2nk) < c/2 \implies 12k^{n+5}n^{n+4}\log^2(2nk)< c.$$

Since $\log^2(c)\leq c^{2/3}$ for all $c\geq 94$, then $6k^{n+5}n^{n+2}\log^2(c) < c/2$ if $6k^{n+5}n^{n+2}c^{2/3} < c/2$.  Rearranging, we get $1728 k^{3n+15}n^{3n+6} < c.$

So, it is sufficient for $$c > \max\{12k^{n+5}n^{n+4}\log^2(2nk), 1728 k^{3n+15}n^{3n+6} , 94\}.$$ But this is satisfied by the choice of $c$ in line 1 of Algorithm \ref{approximate}.
\end{proof}

\subsection{Time-Complexity}
We now consider the time-complexity of Algorithm \ref{approximate}.

\begin{proposition}\label{approx_runtime}
     Let $k$ and $M$ be as in the input of Algorithm \ref{approximate}, $\tilde{M}$ be as throughout Algorithm \ref{approximate}, and $c$ be the multiplier used in line 1. Algorithm  \ref{approximate} takes $\bigo\big(n^5\log^3(nk)\big)$ integer operations.
\end{proposition}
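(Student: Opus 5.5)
I will bound the cost of Algorithm \ref{approximate} line by line and add the contributions. The dominant cost should come from the \textbf{for} loop on lines 7--13. Throughout, ``integer operations'' means arithmetic operations on integers whose bitlength is controlled by Lemma \ref{inflation} and Theorem \ref{overall_inflation}; I will not charge for bit complexity.

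\textbf{Setup and final lines.} Line 1 computes $\adj M$. Using the Bareiss algorithm \cite{bariess}, the determinant and the needed cofactors come from fraction-free elimination with $\bigo(n^3)$ operations. Obtaining the full adjugate from it, for instance via an $LU$-type factorization and back-substitution, costs $\bigo(n^3)$ more. The scaling by $c$ costs $\bigo(n^2)$, and lines 2--6 cost $\bigo(n^2)$.

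For line 14, the Euclidean algorithm on $\det(B_1),\det(B_2)$ takes $\bigo(\log|\det B_i|)$ steps. By Lemma \ref{inflation} and Hadamard's inequality, $|\det B_i| \le (c(2nk)^n n)^n$, which gives $\bigo(n^2\log(nk))$ steps since $\log c = \bigo(n\log(nk))$. Line 15 inverts $\tilde{M}$, or equivalently solves a linear system, in $\bigo(n^3)$ operations.

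\textbf{The loop.} For each $i=1,\dots,n-1$, a single pass of the \textbf{while} loop does three things:
\begin{itemize}
\item it computes $\det(B_1^{(i)})$ and $\det(B_2^{(i)})$ at $\bigo(i^3)$ cost;
\item it computes one gcd at $\bigo(\log|\det|) = \bigo(n^2\log(nk))$ cost;
\item it performs $\bigo(1)$ updates.
\end{itemize}

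The number of passes is bounded by Jacobsthal's function applied to the relevant determinant. By Theorem \ref{jacobsthals} and the argument of Proposition \ref{approximate_prop}, via Lemma \ref{solution_lemma}, the perturbation is at most $j(|da-bc|)$, where $|da-bc|$ is bounded by a product of two $i\times i$ minors. Hence the number of passes is $\bigo(\log^2((s n)^{2n})) = \bigo(n^2\log^2(nk))$, with $s=c(2nk)^n$.

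So iteration $i$ costs
$$\bigo\big(n^2\log^2(nk)\cdot(n^3+n^2\log(nk))\big).$$
Summed over $n$ iterations, this gives roughly $\bigo(n^6\log^2(nk)+n^5\log^3(nk))$.

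\textbf{Main obstacle.} This naive count overshoots the claimed $\bigo(n^5\log^3(nk))$ by recomputing determinants from scratch. The key step is to avoid that recomputation. As in the proof of Proposition \ref{approximate_prop}, each perturbation changes the new determinant linearly:
$$\det\big(B^{(i)}_{\text{new}}\big)=\det\big(B^{(i)}_{\text{old}}\big)+\det\big(B^{(i-1)}\big).$$
So each pass of the \textbf{while} loop needs only $\bigo(1)$ additions plus one gcd.

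The plan is to compute $\det(B_j^{(i)})$ once per $i$, reusing the Bareiss leading minors incrementally for $\bigo(n^2)$ per step, and then update it by additions. Each $i$ then costs
$$\bigo\big(n^2 + n^2\log^2(nk)\cdot n^2\log(nk)\big) = \bigo\big(n^4\log^3(nk)\big).$$
Summing over $i$ gives $\bigo(n^5\log^3(nk))$, which dominates the $\bigo(n^3)$ and $\bigo(n^2\log(nk))$ costs of the other lines and yields the claim.

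The delicate point is justifying that the gcd cost is $\bigo(n^2\log(nk))$ operations rather than a bit-complexity bound, and that the Jacobsthal bound applies uniformly. For the latter I would cite Lemma \ref{inflation}, which bounds all intermediate entries by $c(2nk)^n$ at every step.
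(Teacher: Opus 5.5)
Your outline (determinants computed once per outer iteration and then updated through the cofactor relation, a Jacobsthal bound on the number of \textbf{while} passes, and $\bigo(\log|\det|)$ Euclidean steps per pass) is the paper's, but the pass count is wrong. You write $\bigo\big(\log^2((sn)^{2n})\big)=\bigo\big(n^2\log^2(nk)\big)$ with $s=c(2nk)^n$. However, $\log s=\log c+n\log(2nk)=\Theta\big(n\log(nk)\big)$. So $\log\big((sn)^{2n}\big)=\Theta\big(n^2\log(nk)\big)$, the same order you use for the gcd cost, and its square is $\Theta\big(n^4\log^2(nk)\big)$. With the correct count, your tally gives $\bigo\big(n^4\log^2(nk)\cdot n^2\log(nk)\big)$ per outer iteration and $\bigo\big(n^7\log^3(nk)\big)$ in total. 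The exponent $5$ comes only from the slip. The paper's proof has the same defect. Its count $\bigo\big(n^2\log^2(nk)+\log^2 c\big)$ is what Theorem \ref{jacobsthals} gives for numbers of entry size $c(2nk)^n$. The modulus that actually governs the loop, obtained from Lemma \ref{solution_lemma}, is instead a difference of products of minors, which you yourself bound by about $(sn)^{2n}$. Indeed, the proof of Lemma \ref{inflation} itself charges $\log^2\big((s_{i-1}n)^n\big)$ per perturbation.

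There is a second gap, also inherited from the paper: bounding the passes by Jacobsthal's function of that modulus requires the modulus to be nonzero, since only $\pm1$ are coprime to $0$. Write $a,a'$ for $\det B_1^{(i)},\det B_2^{(i)}$ and $b,d$ for $\det B_1^{(i-1)},\det B_2^{(i-1)}$, so the modulus is $da-ba'$. Lines 4--6 make it nonzero only for $i=1$, where $b=d=1$. If $da=ba'$, coprimality of $b$ and $d$ forces $a=bm$ and $a'=dm$. After $x$ passes the gcd is then $|m+x|$, which is never $1$ for $x\ge0$ when $m\ge2$. Citing Lemma \ref{inflation}, as you propose, bounds sizes but does not rule this out.

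This case actually occurs. Let $E$ have ones on the subdiagonal and take $M=(I_n+E)^{-1}$, i.e.\ $M_{rs}=(-1)^{r-s}$ for $r\ge s$ and $0$ otherwise. Then $k=1$, $\det M=1$ and $\adj M=I_n+E$, so $\tilde M=c(I_n+E)$. Lines 4--6 raise $(\tilde M)_{2,1}$ to $c+1$, and iteration $i=1$ needs no pass. At $i=2$, the determinants after $x$ passes are $(c+1)(c+x)$ and $c(c+x)$, whose gcd is $c+x>1$ for all $x\ge0$. So Algorithm \ref{approximate} as written does not halt on this valid input, and the statement cannot be proved for it. The algorithm must be modified to keep $da-ba'\neq0$ at every step. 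Even then, your corrected argument gives $\bigo\big(n^7\log^3(nk)\big)$, not $\bigo\big(n^5\log^3(nk)\big)$.
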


\begin{proof}
    Computing the adjugate on line 1 takes $\bigo(n^3)$ integer operations.  
     By Lemma \ref{inflation}, the magnitude of entries of $\tilde{M}$ at every point are bounded above by $c(2nk)^n$.
     The Euclidean Algorithm call on line 14 and each call on line 8 takes $$\bigo\big(\log(\min\{\det(B_1),\det(B_2)\})\big)\leq \bigo\big (\log(c^n(2nk)^{n^2+n})\big )=\bigo\big(n^2\log(nk)+n\log(c)\big).$$
    We can see that all the steps on lines 9-13 and 2-6 are constant.
    The matrix inversion and multiplication that produces  the output vector on line 15 will take $\bigo(n^3)$ integer operations.

    Now consider the complexity of the \textbf{for} loop on lines 7-13.
    In each iteration, we compute two determinants that each take $\bigo(i^3)$ integer operations. Notice that we do not recompute these determinants each iteration of the following \textbf{while} loop because of the relationship between the old and new determinants shown in Proposition \ref{approximate_prop}. 

    Theorem \ref{jacobsthals} along with Lemma \ref{solution_lemma} give that the \textbf{while} loop on lines 8-13 will iterate  $\bigo\big(n^2\log^2(nk)+\log^2(c)\big)$ times.
    Multiplying this by the $n-1$ iterations of the \textbf{for} loop and the bound on the Euclidean Algorithm from above and then ignoring smaller terms gives a bound of $\bigo\big(n^5\log^3(nk) + n^2\log^3(c)\big)$ for lines 7-13.

    Evaluating $c$, we see the total number of integer operations is \begin{equation*}\bigo\big(n^5\log^3(nk) + n^2\log^3(1728(nk)^{3n+15})\big)=\bigo\big(n^5\log^3(nk)\big).\qedhere\end{equation*}
\end{proof}

\section{Problem Reductions}\label{reductions_section}

In this section, we give dimension- and gap-preserving reductions from SVP$_\gamma$, SIVP$_\gamma$, and CVP$_\gamma$ to the corresponding problems in SA lattices as well as show that Algorithm \ref{approximate} is optimal in terms of integer inflation.
Each of the following reductions relies heavily on the work done in Algorithm \ref{approximate}. Thus, the three are very similar and Theorem \ref{time_complexity} gives the time-complexity  for all.

\subsection{SVP to SAP}\label{SVP_section}
  First, consider a reduction from SVP$_\gamma$ to SAP$_\gamma$.

\begin{figure}[H]
\begin{algorithm}[H]
\caption{SVP$_\gamma$ to SAP$_\gamma$}\label{SVP_reduction}
    \KwIn{ $k\in \N$, $\gamma=a/b$ where $a,b\in \N$ and $k\geq a\ge b$, and $M\in \Mn(\Z) \,(n\ge 8)$ with $ \det(M)\not = 0$ and $k\geq \max_{i,j} |(M)_{i,j}|$}
    \KwOut{ $\mb{z}_0\in\Z^n$ such that $0<\norm{M\mb{z}_0}\leq \gamma\min_{\mb{z}\in\Z^n}^\times\left\{\norm{M\mb{z}}\right\}$}
     ($\mb{x}, \tilde{M})\gets $ Algorithm \ref{approximate} ($k, M$)\\
     $b_0\gets \SAP(\gamma, \mb{x})$\\
    \Return $M\tilde{M} \{b_0\mb{x}\}$
\end{algorithm}
\end{figure}

\begin{theorem}
    The output of Algorithm \ref{SVP_reduction} solves the original approximate Shortest Vector Problem under the $\ell_1,\ell_2,$ or $\ell_\infty$-norm.
\end{theorem}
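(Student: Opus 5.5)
The plan is to translate the SAP$_\gamma$ guarantee into an SVP$_\gamma$ guarantee through the linear bijection $\mb{v}\mapsto M\tilde{M}\mb{v}$ between the SA lattice and $M\Z^n$. Then Proposition \ref{gap-preservation} absorbs the distortion that this map introduces.

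\textbf{Step 1: identify the SA lattice with $M\Z^n$.} Let $\lat_{\mb{x}}$ be the SA lattice generated by the columns of $I_n$ and $\mb{x}$. Every element of $\lat_{\mb{x}}$ has the form $\mb{a}+b\mb{x}$ with $\mb{a}\in\Z^n$, $b\in\Z$. By Proposition \ref{approximate_prop}, the columns of $M\tilde{M}$ together with $M\tilde{M}\mb{x}$ generate $M\Z^n$. Hence $M\tilde{M}\lat_{\mb{x}}=M\Z^n$, and since $M\tilde{M}$ is invertible, $\mb{v}\mapsto M\tilde{M}\mb{v}$ is a bijection $\lat_{\mb{x}}\to M\Z^n$ taking $0$ to $0$. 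Let $\mb{z}_0=\tilde{M}\{b_0\mb{x}\}$. The vector $\{b_0\mb{x}\}=b_0\mb{x}-\lfloor b_0\mb{x}\rfloor$ lies in $\lat_{\mb{x}}$, so $\tilde{M}\{b_0\mb{x}\}\in\tilde{M}\lat_{\mb{x}}=\Z^n$. The output is therefore an integral combination $M\mb{z}_0$, interpreted as the lattice vector (or its coefficient vector $\mb{z}_0$). It is nonzero because the SAP guarantee gives $\{b_0\mb{x}\}\neq 0$.

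\textbf{Step 2: move the minimization to the SA side.} One remark is needed first. Minimizing $\norm{\{b\mb{x}\}}$ over $b$ equals minimizing over all nonzero vectors of $\lat_{\mb{x}}$ only up to the choice of representative in $\mb{x}+\Z^n$. I would interpret $\{\cdot\}$ as the nearest-integer residue, or simply take SAP as SVP on $\lat_{\mb{x}}$ as the paper does. Then the short vectors of $\lat_{\mb{x}}$ are exactly the $\{b\mb{x}\}$.

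Set $\mathbb{X}=(\tilde{M}\lat_{\mb{x}})\setminus\{0\}\subseteq\Z^n$ and $\mb{y}=\mb{z}_0\in\mathbb{X}$. We need integer vectors to apply Proposition \ref{gap-preservation}, so we rescale by $\det\tilde{M}$ via $\tilde{M}^{-1}=\adj\tilde{M}/\det\tilde{M}$. The SAP guarantee gives
$$\norm{\tilde{M}^{-1}\mb{y}}\le\gamma\norm{\tilde{M}^{-1}\mb{w}}\quad\text{for all }\mb{w}\in\mathbb{X}.$$
Scaling both sides by $|\det\tilde{M}|$ turns this into $\norm{\mb{y}'}\le\gamma\norm{\mb{w}'}$ for the integer vectors $\mb{y}'=\adj\tilde{M}\,\mb{y}$ and $\mb{w}'=\adj\tilde{M}\,\mb{w}$.

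\textbf{Step 3: transfer the inequality through $M\tilde{M}$.} Proposition \ref{gap-preservation} states that an inequality $\norm{\mb{y}}\le\gamma\norm{\mb{x}}$ on integer vectors survives application of $M\tilde{M}$. We apply it to the set $\{\mb{w}'\}$ with $\alpha$ chosen as required by its hypotheses. Here $\alpha=\gamma$-related; I would take $\alpha=1$, $d=1$, which is allowed since $|\det M|\ge 1$. The conclusion is
$$\norm{M\tilde{M}\mb{y}'}\le\gamma\norm{M\tilde{M}\mb{w}'}\quad\text{for all }\mb{w}'.$$
Since $M\tilde{M}\adj\tilde{M}=\det(\tilde{M})\,M$, dividing by $|\det\tilde{M}|$ gives $\norm{M\mb{y}}\le\gamma\norm{M\mb{w}}$. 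As $\mb{w}$ ranges over $\mathbb{X}$, the vectors $M\mb{w}$ range over all nonzero vectors of $M\Z^n$, which is exactly the SVP$_\gamma$ condition.

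\textbf{Main obstacle.} The delicate step is Step 3: making the vectors fed to Proposition \ref{gap-preservation} integral without breaking its hypotheses. The rescaling by $\adj\tilde{M}$ might be replaced by a cleaner observation. The proposition really only uses that $M\tilde{M}$ is a scalar times a near-isometry, which is Lemma \ref{opnorm_prop}, together with integrality of squared norms of vectors in $M\Z^n$, which holds for the image vectors. If the rescaling breaks the hypotheses, I would instead rerun the proof of Proposition \ref{gap-preservation} directly on the real vectors $\{b\mb{x}\}$, since its argument only needs integrality on the output side. The rest is routine bookkeeping.
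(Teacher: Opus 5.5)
\textbf{Step 3 has a genuine gap.} Steps 1 and 2 are fine, and your remark about which representative $\{b\mb{x}\}$ denotes is a fair point that the paper glosses over. The problem is how you use Proposition \ref{gap-preservation}.

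The parameter $\alpha$ there is not free. The first paragraph of that proof takes a minimizer $\mb{x}\in\mathbb{X}$ with $\norm{M\tilde{M}\mb{x}}=\alpha$. It then uses integrality of $\norm{M\tilde{M}\mb{y}}^2$ and $a^2d^2\alpha^2$ to upgrade a strict inequality carrying the factor $\sqrt{a^2d^2\alpha^2+1}/(ad\alpha)$ into $\norm{M\tilde{M}\mb{y}}\le\gamma\alpha$. So $\alpha$ must equal $\min_{\mb{x}\in\mathbb{X}}\norm{M\tilde{M}\mb{x}}$. The hypothesis $\alpha\le|\det M|$ is exactly what makes the relative margin, of order $1/(a^2d^2\alpha^2)$, exceed the distortion from Lemma \ref{opnorm_prop}, of order $kn^4\log^2(2nk)/|c\det M|$.

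Consequences for your argument:
\begin{itemize}
\item Choosing $\alpha=1$ breaks that first step, because a margin tuned to norm $1$ says nothing about vectors of huge norm.
\item If you read $\alpha$ as decoupled from $\mathbb{X}$, the proposition is false. Take $\gamma=1$ and $\mathbb{X}=\{\mb{e}_1,\mb{e}_2\}$. Applying it with $\mb{y}=\mb{e}_1$ and then with $\mb{y}=\mb{e}_2$ would force $\norm{M\tilde{M}\mb{e}_1}=\norm{M\tilde{M}\mb{e}_2}$, which fails in general.
\item Rescaling by $\adj\tilde{M}$ makes things worse. The image set becomes $\det(\tilde{M})\,M\Z^n\setminus\{0\}$, so the correct $\alpha$ is $|\det\tilde{M}|\,\lambda_1(M\Z^n)$. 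This exceeds $|\det M|$ by a factor of order $c^n$. Enlarging $c$ cannot compensate, since $\det\tilde{M}$ itself grows like $c^n$.
\end{itemize}
Making $\mb{y}$ integral was the wrong target. The integrality the argument needs is on the output side, and it already holds without rescaling.

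\textbf{The repair is your fallback, which is exactly the paper's proof.} Do not rescale. Apply the proposition with $\mathbb{X}$ the nonzero vectors of the SA lattice and $\mb{y}=\{b_0\mb{x}\}$. The only integrality used is that of $\norm{M\tilde{M}\mb{y}}^2$ and $\norm{M\tilde{M}\mb{x}}^2$, and this holds because $M\tilde{M}\mb{y}$ and $M\tilde{M}\mb{x}$ lie in $M\Z^n$. Integrality of $\mb{y}$ itself is not needed; the paper's ``$\mathbb{X}\subset\Z^n$'' is misleading on this point.

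Then take $\alpha=\lambda_1(M\Z^n)$ and $d=1$. The discreteness step only needs $a^2\alpha^2\in\Z$, which holds even under $\ell_2$. Finally, check the one hypothesis with content, $\lambda_1(M\Z^n)\le|\det M|$. The paper cites Minkowski; it also follows directly from $\det(M)\mb{e}_1=M(\adj M)\mb{e}_1\in M\Z^n$. With this, the choice of $c$ in line 1 of Algorithm \ref{approximate} yields $\norm{M\tilde{M}\mb{y}}\le\gamma\lambda_1(M\Z^n)$.
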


\begin{proof}
    Let $\mb{y}= \{b_0\mb{x}\}$ where $b_0$ is as in line 2.
    We see that $M\tilde{M}\mb{y}$ is in $M\Z^n$. 
    Let $\lambda_1$ be the length of the shortest vector in $M\Z^n$.
    By Minkowski's theorem \cite{Minkowski}, we know $\lambda_1\le|\det M|^{1/n}\le |\det M|.$
     Since $\mb{y}$ solves SVP$_\gamma$ in the SA lattice, we can say $\norm{\mb{y}}\le \gamma\norm{\mb{u}}$ for all $\mb{u}$ in the SA lattice.
    Applying Proposition \ref{gap-preservation} with $\alpha =\lambda_1/1$ and $\gamma$ as in the input of Algorithm \ref{SVP_reduction}  gives that $\norm{M\tilde{M}\mb{y}}\le \gamma\norm{M\tilde{M}\mb{u}}$ for all $\mb{u}$ in the SA lattice. Hence $M\tilde{M}\mb{y}$ is a solution to SVP$_\gamma$ in $M\Z^n$.
\end{proof}

In 2002, Dinur showed the following hardness result for the $\ell_\infty$-norm \cite{dinur}:
\begin{theorem}
    SVP$_\gamma$ is NP-hard under the $\ell_\infty$-norm for any $\gamma <n^{c/\log\log n}$ for some constant $c>0.$
\end{theorem}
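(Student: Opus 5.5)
This statement is Dinur's theorem, imported from \cite{dinur}, and the paper uses it only as a black box to combine with Algorithm \ref{SVP_reduction}. So the natural ``proof'' here is the citation. If I had to reconstruct the argument, I would follow Dinur's route: first prove quasi-polynomial-factor hardness for a combinatorial gap problem, then embed it into CVP$_\gamma$ under $\ell_\infty$, and finally homogenize to SVP$_\gamma$. The $\ell_\infty$-norm makes this homogenization essentially lossless.

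\textbf{Step 1: a gap version of SSAT.} Start from a low-error PCP, i.e.\ a Label-Cover/SAT instance with soundness $n^{-c/\log\log n}$. The \emph{super-satisfiability} problem SSAT$_\infty$ (Dinur--Kindler--Raz--Safra) assigns each variable an integer combination of values (a ``super-assignment'') so that every test is consistently supported. The norm of a super-assignment is the maximum, over variables, of the $\ell_1$ mass of its combination. The gap to establish is the following:
\begin{itemize}
\item completeness: a satisfiable instance admits a natural assignment of norm $1$;
\item soundness: in a far-from-satisfiable instance, every nontrivial consistent super-assignment has norm $\ge g=n^{c/\log\log n}$.
\end{itemize}
This is the step I expect to be the main obstacle. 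It needs an iterated low-degree-test/PCP composition (Dinur--Harsha--Kindler style) whose error stays polynomially small across $\log\log n$ levels of recursion. The argument must also show that a low-norm super-assignment can be ``decoded'' into a genuine assignment that satisfies too many tests. I would aim for the decoding argument via a list-decoding of each super-assignment onto its few heavy values.

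\textbf{Step 2: SSAT$_\infty$ to CVP$_\infty$.} Give one lattice coordinate block per test and one generator per (test, satisfying local assignment) pair. The consistency constraints between tests sharing a variable are encoded by coordinates forced to zero with large weight. The target is the all-ones indicator of ``each test picks exactly one assignment.'' A natural assignment then yields a lattice vector at $\ell_\infty$-distance $1$. Conversely, any vector at $\ell_\infty$-distance $<g$ reads off as a super-assignment of norm $<g$, because the $\ell_\infty$-norm bounds each coordinate's $\ell_1$ mass directly.

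\textbf{Step 3: CVP$_\infty$ to SVP$_\infty$.} Append the target as an extra generator with a new coordinate of weight $1$. Short vectors using the target coefficient $\pm1$ correspond to close vectors. Vectors using a coefficient of $0$ or $|{\ge}2|$ must be shown to be long, which follows from the structure of the SSAT lattice (no short nonzero homogeneous solutions). This is exactly where $\ell_\infty$ helps, since the extra coordinate does not add to the norm. The result is SVP$_\gamma$ NP-hard for $\gamma<n^{c/\log\log n}$.
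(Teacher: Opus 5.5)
Citing Dinur is all the paper does here: the statement is imported and used as a black box. So your first sentence matches the paper's ``proof'' exactly. The reconstruction you then sketch would not go through, however, and two of its steps have concrete gaps.

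\textbf{Step~2 has the norm inequality backwards.} You claim that a vector at $\ell_\infty$-distance $<g$ yields a super-assignment of norm $<g$ ``because the $\ell_\infty$-norm bounds each coordinate's $\ell_1$ mass.'' In fact $\|\mathbf{v}\|_\infty\le\|\mathbf{v}\|_1$, not the reverse. Suppose there is one coordinate per (test, local assignment) pair, and a consistent combination puts $m$ coefficients equal to $\pm1$ on a test. That test contributes only $1$ to the $\ell_\infty$-norm, while its $\ell_1$ mass is $m$. So closeness in $\ell_\infty$ says nothing about the SSAT norm, and the NO case collapses. You need a gadget that makes $\ell_\infty$ see $\ell_1$ mass. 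For example, give each test $\psi$ a block of coordinates indexed by sign patterns $\epsilon\in\{\pm1\}^{R_\psi}$, with generator $(\psi,r)$ having entry $\epsilon_r$. Then $\max_\epsilon|\sum_r\epsilon_r c_r|=\sum_r|c_r|$. That block has $2^{|R_\psi|}$ coordinates, so the instance from Step~1 would also need tiny local ranges, which your Step~1 never provides.

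\textbf{Step~1 conflates PCP soundness with super-assignment soundness.} A PCP's soundness controls assignments, and by a sampling argument small \emph{nonnegative} combinations. Super-assignments, however, have \emph{signed} coefficients whose mass can cancel under projection. Concretely, add to any NO instance a fresh Boolean variable $z$ that a single test $\psi_0$ ignores; PCP soundness is unchanged. Now set $S(z)=\mathbf{e}_0-\mathbf{e}_1$ and $S\equiv 0$ on every other variable. Support $\psi_0$ by $(r,0)-(r,1)$ for a fixed $r$, and every other test by $0$. This is a consistent nontrivial super-assignment of norm $2$. It carries no information about any assignment, so your list-decoding step has nothing to recover.

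The missing idea is that cancellation-resistance must come from the algebraic structure of the construction itself. In the Dinur--Kindler--Raz--Safra construction and in Dinur's SSAT$_\infty$, the local values are (restrictions of) low-degree polynomials. Distinct ones agree on few points, so any cancellation forces many summands and hence large norm. Dinur builds this directly from SAT without invoking the PCP theorem at all. Your Step~3 rests entirely on the homogeneous form of this soundness (target coefficient $0$ or of absolute value at least $2$), so it inherits the same gap.
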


In 2023, Bennett and Peikert showed the following hardness result for all other $\ell_p$-norms \cite{svp_hard}:
\begin{theorem}
    SVP$_\gamma$ is NP-hard under randomized reductions for the $\ell_p$-norm with any $1\le p < \infty$ and constant $\gamma< 2^{1/p}.$
\end{theorem}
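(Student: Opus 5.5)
The statement is the theorem of Bennett and Peikert \cite{svp_hard}. It is an external result that the paper only cites, so I would not reprove it from the machinery developed here. The natural route is Micciancio's framework for reducing the Closest Vector Problem to SVP$_\gamma$ through a \emph{locally dense lattice}, which Bennett and Peikert instantiate with a lattice built from Reed--Solomon codes. The plan has three stages: build the gadget, combine it with a hard CVP instance, and control the randomized step.

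\textbf{Gadget.} First I would construct a locally dense lattice in the $\ell_p$-norm. This is a lattice $\lat'$ with minimum distance $\lambda_1(\lat')\ge 1$, together with a target $\mb{s}$ and radius $r<2^{-1/p}$, such that the ball $B(\mb{s},r)$ contains exponentially many lattice vectors. For the construction I would take Construction A applied to a Reed--Solomon code $C\subseteq\mathbb{F}_q^m$, that is, the lattice $C+q\Z^m$, after a suitable reweighting so that its minimum distance is governed by the code's minimum distance $m-k+1$. For the target I would take $\mb{s}$ equal to a scaled all-ones vector. The count of close vectors comes from the fact that low-weight $0/1$ vectors $\mb{z}$ with $\mb{z}\equiv$ a codeword modulo $q$ correspond to polynomial identities over $\mathbb{F}_q$. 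Counting them reduces to a counting/pigeonhole argument over subsets of evaluation points, which gives a bound of the form ${m\choose h}/q^{\,m-k}$.

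\textbf{Combination.} Next I would start from CVP in the $\ell_p$-norm, which is NP-hard for every constant factor, and form the standard embedding. The lattice is the direct sum of a scaled copy of the CVP lattice and the gadget $\lat'$, with one extra generator that joins the CVP target and the gadget target $\mb{s}$. Two properties must hold:
\begin{itemize}
\item \emph{Completeness.} In the YES case, each close vector of the gadget combined with the CVP witness yields a lattice vector of $p$-th power norm at most roughly $r^p+\varepsilon<\tfrac12$.
\item \emph{Soundness.} In the NO case, every nonzero vector either uses the extra generator, and then pays the CVP distance, or avoids it, and then pays $\lambda_1(\lat')^p\ge 1$ in the gadget coordinates.
\end{itemize}
The ratio between these two bounds is what produces the threshold $\gamma<2^{1/p}$.

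\textbf{Randomized step (main obstacle).} The remaining difficulty is that in the NO case, vectors avoiding the extra generator can still be short, namely differences of the many close vectors, so the minimum-distance property alone does not suffice. I would handle this with Micciancio's randomized projection/sparsification. I would apply a random linear map, or reduce modulo a random prime sublattice, so that with noticeable probability at least one YES-case short vector survives, while the gadget's short nonzero vectors are eliminated. The main technical estimate is showing that the exponential number of close vectors outnumbers the image size needed for this surjectivity argument (a leftover-hash style bound). This requires choosing the parameters $q,m,k,h$ of the Reed--Solomon code so that $r<2^{-1/p}$ and the count is large enough simultaneously. I expect this parameter balancing to be the hardest part, and it is exactly where the constant $2^{1/p}$, rather than an arbitrary constant, appears.
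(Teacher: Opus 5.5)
The paper gives no proof of this statement. It is simply quoted from Bennett and Peikert \cite{svp_hard}, so deferring to the citation is exactly the paper's treatment. The outline you attach, however, would not go through as written.

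First, the radius condition is reversed. The embedding needs $r<\lambda_1(\lat')/\gamma$. Since $\gamma<2^{1/p}$, this means $r$ slightly \emph{above} $2^{-1/p}\lambda_1(\lat')$. Already for $p=1,2$, a radius $r<2^{-1/p}$ with $\lambda_1(\lat')\ge 1$ rules out local density. In $\ell_1$ the ball contains at most one lattice point. In $\ell_2$ any two lattice points $\mb{v},\mb{w}$ in it satisfy $\langle\mb{v}-\mb{s},\mb{w}-\mb{s}\rangle\le r^2-\tfrac12<0$, so there are at most $m+1$ of them. Likewise, your YES bound $r^p+\varepsilon<\tfrac12$ against the NO bound $1$ would give a gap \emph{exceeding} $2^{1/p}$, contrary to the threshold you attribute to this ratio.

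Second, certifying $\lambda_1$ through the Hamming distance $m-k+1$ cannot give any gap. Since $m\le q$, we have $\binom{m}{h}\le q^h$. So your count $\binom{m}{h}/q^{m-k}$ exceeds $1$ only when $h>m-k$, i.e.\ only when $r^p=h$ already reaches the certified bound on $\lambda_1^p$. The missing idea is the ``Craig lattice'' argument at the core of Bennett--Peikert. The Construction A lattice with parity-check rows $(\alpha_1^j,\dots,\alpha_m^j)$, $0\le j<m-k$, has $\ell_1$ minimum distance about \emph{twice} the redundancy. To see this, write a nonzero lattice vector as $\mb{z}^+-\mb{z}^-$. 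The multisets of evaluation points encoded by $\mb{z}^{\pm}$ have equal power sums of degree $<m-k$. By Newton's identities, $\prod_i(X-\alpha_i)^{z^+_i}$ and $\prod_i(X-\alpha_i)^{z^-_i}$ share their top $m-k$ coefficients. This forces $\|\mb{z}^+\|_1=\|\mb{z}^-\|_1\ge m-k$ unless $\|\mb{z}\|_1\ge q$. Hence $\norm{\mb{z}}_p^p\ge\|\mb{z}\|_1\ge\min\{2(m-k),q\}$. Now take $h=(1+\varepsilon)(m-k)$, $m=q$, and $q$ a large enough polynomial in $h$. Then the count is exponential while $\lambda_1^p/r^p\ge 2/(1+\varepsilon)$. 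That is where $2^{1/p}$ comes from; it is a property of the gadget, not of the randomized step.

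Third, the combination step fails soundness, and your randomization targets a non-problem. In a plain direct sum, vectors lying entirely in the scaled CVP summand (extra coefficient $0$, gadget part $\mb{0}$) are not controlled by the NO condition and can be short. Neither of your bullets covers them. Conversely, the obstacle you name does not exist: differences of close vectors are nonzero vectors of $\lat'$, so they have length at least $\lambda_1(\lat')\ge 1$.

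Micciancio's framework, which Bennett and Peikert follow, couples the CVP coefficients to the gadget coefficients through an integer matrix $T$. It uses the basis $\begin{pmatrix}\beta BT&\beta\mb{y}\\ L&\mb{s}\end{pmatrix}$, where $(B,\mb{y})$ is the CVP instance, $L$ is a basis of $\lat'$, and $\beta$ is a suitable scale. Then every nonzero vector with extra coefficient $0$ has nonzero gadget part $L\mb{x}$ and so costs $\lambda_1(\lat')$. Randomness is needed for \emph{completeness}. A random $T$ must, with high probability, map the coefficient vectors of the close gadget vectors onto all of $\{0,1\}^{k'}$, so that the YES witness $\mb{z}=T\mb{x}$ is realized by a close vector. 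The source problem must also be Micciancio's variant of CVP rather than plain CVP. Its YES witnesses are $0/1$, as this surjectivity requires. Its NO condition bounds $\norm{B\mb{z}-w\mb{y}}$ from below for every nonzero integer $w$, since the extra generator may appear with any coefficient.
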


Combining the above theorems gives the following corollary:
\begin{corollary}
    SAP$_\gamma$ is NP-hard under the $\ell_\infty$-norm with $\gamma <n^{c/\log\log n}$ for some constant $c>0$ and is NP-hard under randomized reductions for the $\ell_1$ or $\ell_2$-norm with constant $\gamma< 2$ or $\gamma < 2^{1/2}$, respectively. 
\end{corollary}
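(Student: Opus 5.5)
The plan is to chain the two hardness theorems quoted just above through the reduction of Algorithm \ref{SVP_reduction}. Suppose we had a deterministic polynomial-time algorithm for SAP$_\gamma$ under a given norm. Given an SVP$_\gamma$ instance $(k,\gamma,M)$ with $M\in\Mn(\Z)$ nonsingular, we run Algorithm \ref{SVP_reduction} with this algorithm in place of the oracle $\SAP(\gamma,\mb{x})$. The preceding theorem shows that the output $M\tilde{M}\{b_0\mb{x}\}$ solves SVP$_\gamma$ in $M\Z^n$ under the $\ell_1,\ell_2$, or $\ell_\infty$-norm. Proposition \ref{approx_runtime}, together with the bitlength bound $\bigo\big(n^2\log(nk)\big)$ of Theorem \ref{overall_inflation}, makes the reduction polynomial in the input size. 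The SAP instance has the same dimension $n$ and the same $\gamma$, since Proposition \ref{gap-preservation} is what made the reduction gap-preserving. So this is a Karp/Cook reduction from SVP$_\gamma$ to SAP$_\gamma$ with the same parameters.

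For the $\ell_\infty$ statement, I would compose this reduction with Dinur's theorem. SVP$_\gamma$ is NP-hard for every $\gamma<n^{c/\log\log n}$. Because our reduction keeps both $n$ and $\gamma$ unchanged, SAP$_\gamma$ is NP-hard in the same range. For $\ell_1$ and $\ell_2$, I would take the Bennett--Peikert theorem with $p=1$ and $p=2$, which gives the thresholds $2^{1/1}=2$ and $2^{1/2}$. A randomized reduction to SVP$_\gamma$ followed by our deterministic reduction is still a randomized reduction, so SAP$_\gamma$ is NP-hard under randomized reductions for these constants.

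The main obstacle is checking that the side conditions of Algorithm \ref{SVP_reduction} hold for the hard instances.
\begin{itemize}
\item Rational $\gamma$: the algorithm needs $\gamma=a/b$ with $k\ge a\ge b$. If $\gamma$ is irrational or too close to a bound, I would replace it by a rational $\gamma'\le\gamma$ that is still in the hard range; solving with the smaller factor only strengthens the output. I would then enlarge $k$ to at least $a$, which costs only a polynomial increase.
\item Dimension: the requirement $n\ge 8$ can be met by padding with a direct sum of a large multiple of the identity, which does not change $\lambda_1$. Alternatively, small $n$ can be handled by brute force.
\item Full rank: the hardness results are stated for general lattice bases, so I would restrict to full-rank integer bases. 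This is standard, since the hard instances can be taken full rank by the usual embedding.
\end{itemize}
Once these adjustments are made, the corollary follows directly.
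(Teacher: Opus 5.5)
Your proposal is correct and is essentially the paper's argument, which simply composes the reduction of Algorithm \ref{SVP_reduction} with the hardness theorems of Dinur and Bennett--Peikert. The side conditions you check (a rational $\gamma=a/b$ with $k\ge a$, the requirement $n\ge 8$, full rank of $M$, and composing a randomized reduction with a deterministic one) are left implicit in the paper, so your version is, if anything, more careful.
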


\subsection{SIVP to SIAP}\label{SIVP_section}
    Now consider a reduction from SIVP$_\gamma$ to SIAP$_\gamma$.

\begin{figure}[H]
\begin{algorithm}[H]
\caption{SIVP$_\gamma$ to SIAP$_\gamma$}\label{SIVP_reduction}
    \KwIn{ $k\in \N$, $\gamma=a/b$ where $a,b\in \N$ and $k\geq a\ge b$, and $M\in \Mn(\Z) \,(n\ge 8)$ with $ \det(M)\not = 0$ and $k\geq \max_{i,j} |(M)_{i,j}|$}
    \KwOut{ $\mb{v}_1,\mb{v}_2,\dots,\mb{v}_n\in\Z^n$ such that $\mb{v}_i$ are linearly independent and $\max_i\left\{\norm{M\mb{v}_i}\right\}\leq \gamma\lambda_n$}
     $(\mb{x},\tilde{M})\gets $ Algorithm \ref{approximate} ($k, M$)\\
     $b_1,b_2,\dots,b_n\gets \SIAP(\gamma, \mb{x})$\\
    \Return $M\tilde{M}\{b_1\mb{x}\}, M\tilde{M}\{b_2\mb{x}\},\dots, M\tilde{M}\{b_n\mb{x}\}$
\end{algorithm}
\end{figure}

\begin{theorem}\label{SIVP_correct}
    The output of Algorithm \ref{SIVP_reduction} solves the initial approximate Shortest Independent Vector Problem under the $\ell_1,\ell_2,$ or $\ell_\infty$-norm.
\end{theorem}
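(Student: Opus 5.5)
The proof will mirror the SVP case, with two extra ingredients: linear independence has to be carried across the map, and the gap comparison has to be made against $\lambda_n$ rather than against a single shortest vector.

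\textbf{Step 1 (lattice vectors and independence).} Let $\mb{y}_i=\{b_i\mb{x}\}$ for the integers $b_1,\dots,b_n$ returned on line 2. Each $\mb{y}_i$ lies in the SA lattice generated by $I_n$ and $\mb{x}$, so $\mb{y}_i\in\Z^n+\Z\mb{x}$. By Proposition \ref{approximate_prop}, $M\tilde{M}$ maps this SA lattice onto $M\Z^n$, hence every $M\tilde{M}\mb{y}_i$ lies in $M\Z^n$. Since $\det M\ne 0$ and $\tilde{M}$ is invertible (line 15 uses $\tilde{M}^{-1}$), $M\tilde{M}$ is injective. The SIAP oracle guarantees the $\mb{y}_i$ are linearly independent, so the images $M\tilde{M}\mb{y}_i$ are linearly independent too.

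\textbf{Step 2 (the approximation factor).} Let $\mathbb{X}$ be the SA lattice. The SIAP guarantee says $\max_i\norm{\mb{y}_i}\le\gamma\lambda_n(\mathbb{X})$. Fix linearly independent vectors $\mb{u}_1,\dots,\mb{u}_n\in\mathbb{X}$ attaining $\lambda_n(\mathbb{X})$, i.e. $\max_j\norm{\mb{u}_j}=\lambda_n(\mathbb{X})$. Take $\mb{u}^*$ to be the longest of them, so that $\norm{\mb{y}_i}\le\gamma\norm{\mb{u}^*}$ for every $i$.

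Now choose independent vectors $\mb{w}_1,\dots,\mb{w}_n$ in $\mathbb{X}$ whose images $M\tilde{M}\mb{w}_j$ realize $\lambda_n(M\Z^n)$. By the minimality of $\lambda_n(\mathbb{X})$,
\[
\max_j\norm{\mb{w}_j}\ge\lambda_n(\mathbb{X}),
\]
so each $\mb{y}_i$ satisfies $\norm{\mb{y}_i}\le\gamma\norm{\mb{w}_{j^*}}$, where $\mb{w}_{j^*}$ is the longest of the $\mb{w}_j$.

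We then apply Proposition \ref{gap-preservation} to the one-point set $\{\mb{w}_{j^*}\}$, taking $\alpha=\lambda_n(M\Z^n)$ (assuming this can be taken rational, as in the SVP proof). This gives
\[
\norm{M\tilde{M}\mb{y}_i}\le\gamma\norm{M\tilde{M}\mb{w}_{j^*}}.
\]
The trouble is that $\norm{M\tilde{M}\mb{w}_{j^*}}$ is not obviously bounded by $\lambda_n(M\Z^n)$, because the longest preimage need not have the longest image. The fix is to use the operator-norm bounds of Lemma \ref{opnorm_prop} directly. Up to scaling, $M\tilde{M}$ is $I_n+\frac{MA}{c\det M}$, which distorts norms by a factor of at most $(1+\epsilon)/(1-\epsilon)$. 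Hence the ordering of lengths is preserved up to that factor, giving
\[
\norm{M\tilde{M}\mb{w}_{j^*}}\le\frac{1+\epsilon}{1-\epsilon}\,\lambda_n(M\Z^n).
\]
The Proposition \ref{gap-preservation} rounding argument, with $c$ as chosen on line 1, absorbs this distortion factor in the same way it absorbs the one in the SVP case. This yields $\norm{M\tilde{M}\mb{y}_i}\le\gamma\lambda_n(M\Z^n)$.

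\textbf{Step 3 (hypotheses of Proposition \ref{gap-preservation}).} We must check that $\alpha\le|\det M|$. By Minkowski's second theorem together with Hadamard's inequality, $\lambda_n\le|\det M|$ up to the relevant constants, because any basis column has length at most roughly $nk$, and $nk\le|\det M|$ need not hold for arbitrary $M$. If it fails, we instead bound using the integrality argument with $\alpha=\lambda_n$ directly.

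\textbf{Main obstacle.} The hardest step is Step 2: the $\max$-over-independent-sets structure of $\lambda_n$ does not pass through Proposition \ref{gap-preservation} as directly as $\lambda_1$ does. I expect to handle it by the two-sided distortion bound, comparing $\lambda_n(\mathbb{X})$ with $\lambda_n(M\Z^n)$ scaled by $c\det M$, and then letting the integrality slack in the proof of Proposition \ref{gap-preservation} absorb the distortion.
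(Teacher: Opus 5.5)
\textbf{The gap is in Step 3.} Proposition \ref{gap-preservation} needs $\alpha=\lambda_n(M\Z^n)\le|\det M|$, and you never establish this.

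\begin{itemize}
\item Bounding $\lambda_n$ by the column lengths of $M$ is just the definition of $\lambda_n$, not Minkowski's second theorem. It gives at most $nk$, which, as you note yourself, can exceed $|\det M|$.
\item Your fallback, ``use the integrality argument with $\alpha=\lambda_n$ directly,'' is not an argument.
\end{itemize}

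The missing observation, which is exactly what the paper uses, is that $M\,\adj M=\det(M)I_n$. So $\det(M)\mb{e}_1,\dots,\det(M)\mb{e}_n$ are $n$ linearly independent vectors of $M\Z^n$, each of $\ell_p$-norm exactly $|\det M|$. Hence $\lambda_n(M\Z^n)\le|\det M|$ in every $\ell_p$-norm.

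Your column bound could be salvaged, but not by citing the proposition as stated. You would have to re-run the computation inside the proof of Proposition \ref{gap-preservation} with $\alpha\le nk$ in place of $\alpha\le|\det M|$, and check that the $c$ of line 1 still dominates.

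\textbf{Otherwise your route is the paper's.} You lift through $M\tilde{M}$, get membership and independence from Proposition \ref{approximate_prop} and invertibility, and apply Proposition \ref{gap-preservation} with $\alpha=\lambda_n$.

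Your reduction of the max-over-independent-$n$-tuples structure to a single comparison vector $\mb{w}_{j^*}$ is more careful than the paper. The paper applies the single-vector proposition directly to the comparison of maxima over all independent $n$-tuples, without comment.

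However, the ``trouble'' you then raise is illusory. The vector $\mb{w}_{j^*}$ is one of the $\mb{w}_j$, which you chose so that every image has norm at most $\lambda_n(M\Z^n)$. So $\norm{M\tilde{M}\mb{y}_i}\le\gamma\norm{M\tilde{M}\mb{w}_{j^*}}\le\gamma\lambda_n(M\Z^n)$ follows at once. The second distortion-and-rounding pass via Lemma \ref{opnorm_prop} is unnecessary.
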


\begin{proof}
 Let $\mb{y}_i=\{b_i\mb{x}\}$ for each $b_i$ from line 2.
   Then each $M\tilde{M}\mb{y}_i$ will lie in $M\Z^n$ and they will all be linearly independent since $M\tilde{M}$ is full rank. 
   Since $\det (M)I_n$ is a sublattice of $M\Z^n$, we have $\lambda_n \leq |\det M|$. 

    Consider $\mathcal{B}$ the collection of all sets of $n$ linearly independent vectors in the SA lattice. 
    We can say $\max_i\left\{\norm{\mb{y}_i}\right\}\le \gamma \max_i\left\{\norm{\mb{b}_i}\right\}$ for all $\{\mb{b}_1, \mb{b}_2,\dots,\mb{b}_n\}\in\mathcal{B}.$

    Applying Proposition \ref{gap-preservation} with $\alpha = \lambda_n/1$ and $\gamma$ as in the input of Algorithm \ref{SIVP_reduction} gives that $\max_i\left\{\norm{M\tilde{M}\mb{y}_i}\right\}\le \gamma \max_i\left\{\norm{M\tilde{M}\mb{b}_i}\right\}$ for all  $\{\mb{b}_1, \mb{b}_2,\dots,\mb{b}_n\}\in\mathcal{B}.$
    Hence the $M\tilde{M}\mb{y}_i$ are a solution to SIVP$_\gamma$ in $M\Z^n.$
\end{proof}

In 1999, Bl{\"o}mer and Seifert showed the following result about the hardness of SIVP$_\gamma$ \cite{sivp_hard}:
\begin{theorem}
    SIVP$_\gamma$ is NP-hard for any constant $\gamma$.
\end{theorem}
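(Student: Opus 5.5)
The plan is not to prove this statement from scratch. It is the cited hardness result of Bl\"omer and Seifert, so the task is to check that the citation applies in exactly the form the paper needs, and to set it up so it can be combined with Theorem \ref{SIVP_correct}. Concretely, I would recall their reduction from CVP (in fact from a variant of the exact or approximate closest vector problem) to SIVP$_\gamma$, and confirm that it produces a full-rank integer lattice $M\Z^n$ whose entries have polynomially bounded bit length. That is the input format required by Algorithm \ref{SIVP_reduction}.

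The main step is the chain of reductions. Start from an NP-hard problem; Bl\"omer and Seifert use exact CVP, or CVP$_\gamma$ for any constant $\gamma$ via Arora, Babai, Stern and Sweedyk. First reduce it to an instance of the successive minima problem SMP. Then observe that approximating $\lambda_n$ within $\gamma$ reduces to SIVP$_\gamma$: the output vectors certify an upper bound on $\lambda_n$, and the longest of them is within $\gamma$ of $\lambda_n$.

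The key technical point in their construction is an embedding. The CVP instance $(B,\mb{t})$ is placed into a lattice where the last successive minimum $\lambda_n$ is controlled by $\operatorname{dist}(\mb{t},\lat(B))$. In the YES case every independent set of short vectors has bounded length. In the NO case any $n$ independent vectors must include one that uses the target direction, and that vector is long by a factor exceeding $\gamma$. I would invoke this construction as stated in \cite{sivp_hard} rather than redo it.

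The main obstacle is ensuring the gap survives for \emph{every} constant $\gamma$ and for the norms used later ($\ell_1$, $\ell_2$, $\ell_\infty$). The fix I would try first is to start from CVP$_{\gamma'}$ with $\gamma'$ a sufficiently large constant; that problem is NP-hard for all constants in any $\ell_p$-norm. Then I would scale the target-direction coordinate so that the NO-case blowup is at least $\gamma$.

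Finally, I would note that the instances produced have integer entries and $\det\neq 0$, and that one can pad to dimension $n\ge 8$. This makes the hypotheses of Algorithm \ref{SIVP_reduction} hold, which is what is needed to derive the NP-hardness of SIAP$_\gamma$ stated in the introduction.
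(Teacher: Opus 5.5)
Your proposal matches the paper, which gives no proof of this statement and simply cites Bl\"omer and Seifert \cite{sivp_hard}; that result already covers every constant $\gamma$, so your amplification step is unnecessary. Don't offer your sketch of their construction as part of the argument, though. For an arbitrary CVP$_{\gamma'}$ instance $(B,\mb{t})$, the value $\lambda_n$ of an embedding lattice can be dictated by long vectors of $\lat(B)$, or by multiples $w\mb{t}$ lying close to $\lat(B)$, which is why a structured CVP variant is needed. Also, rescaling the target coordinate can only shrink the gap, never push it beyond $\gamma'$.
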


This immediately gives the following corollary:
\begin{corollary}
    SIAP$_\gamma$ is NP-hard for any constant $\gamma$ under the $\ell_1,\ell_2,$ or $\ell_\infty$-norm.
\end{corollary}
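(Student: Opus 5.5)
The corollary follows by composing the Blömer--Seifert theorem with the reduction of Algorithm \ref{SIVP_reduction}, checking that the composition is a polynomial-time many-one (Karp-style) reduction for every fixed constant $\gamma$. Fix a rational constant $\gamma=a/b\ge 1$; for $\gamma<1$ SIVP$_\gamma$ has no solutions, so we may assume $\gamma \geq 1$. Given any SIVP$_\gamma$ instance $M\in\Mn(\Z)$ with $\det M\neq 0$, set
\[
k=\max\Bigl\{a,\ \max_{i,j}|(M)_{i,j}|\Bigr\}.
\]
Since $a$ is a constant, $\log k$ is at most the input length plus $\bigo(1)$. Run Algorithm \ref{SIVP_reduction} on $(k,\gamma,M)$. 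By Theorem \ref{SIVP_correct}, any valid answer of $\SIAP(\gamma,\mb{x})$ is mapped to a valid answer of the original SIVP$_\gamma$ instance under the $\ell_1,\ell_2$, or $\ell_\infty$-norm. The value of $\gamma$ passed to the oracle is unchanged, which is the gap-preservation.

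Next I would verify polynomiality. Algorithm \ref{approximate} uses $\bigo\big(n^5\log^3(nk)\big)$ integer operations by Proposition \ref{approx_runtime}. By Lemma \ref{inflation} and Theorem \ref{overall_inflation}, every integer involved, and in particular the entries of $\mb{x}$ and $\tilde{M}$, has bitlength $\bigo\big(n^2\log(nk)\big)$. So both the oracle query and the post-processing (computing the fractional parts $\{b_i\mb{x}\}$ and multiplying by $M\tilde{M}$) take time polynomial in the input size and in the bitlength of the oracle's answers. The size of $\mb{x}$ is polynomial in the original input size, so an efficient SIAP$_\gamma$ algorithm would give an efficient SIVP$_\gamma$ algorithm.

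The restriction $n\ge 8$ in Algorithm \ref{approximate} needs separate handling. SIVP in bounded dimension is solvable in polynomial time: for $n<8$, one can enumerate, or one can use that the Blömer--Seifert hardness instances have growing dimension. Either way, NP-hardness of SIVP$_\gamma$ restricted to $n\ge8$ is unaffected. Alternatively, a low-dimensional lattice can be padded to dimension $8$ by a direct sum with a large scaled identity block, which preserves the answer up to discarding the extra vectors; I'd mention this only as a fallback.

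The main obstacle is the norm issue. Blömer--Seifert's result is stated for a specific $\ell_p$ norm (essentially all $\ell_p$), and I would cite that it covers $\ell_1$, $\ell_2$, and $\ell_\infty$ for every constant $\gamma$. Proposition \ref{gap-preservation}, which underlies Theorem \ref{SIVP_correct}, is proven only for these three norms, which is exactly why the corollary is restricted to them. With those pieces, composing the NP-hardness reduction to SIVP$_\gamma$ with Algorithm \ref{SIVP_reduction} yields NP-hardness of SIAP$_\gamma$.
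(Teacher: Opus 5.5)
Your proposal is correct and follows the paper's own route: the paper gets the corollary immediately by composing Bl{\"o}mer--Seifert's NP-hardness of SIVP$_\gamma$ with Algorithm~\ref{SIVP_reduction} (correctness from Theorem~\ref{SIVP_correct}, cost from Theorem~\ref{time_complexity}), and your bookkeeping on $k$, $\gamma<1$, bitlengths and the $n\ge 8$ restriction fills in details the paper leaves implicit. One small slip is in your fallback: for SIVP the padding block must be unscaled, e.g.\ $M\oplus I_{8-n}$, which keeps $\lambda_8=\lambda_n(M\Z^n)$ because integer lattices have $\lambda_n\ge 1$; a large scaled block would raise $\lambda_8$ to that scale and make the $\gamma\lambda_8$ guarantee useless, but your primary argument via polynomial-time solvability in bounded dimension already suffices.
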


\subsection{CVP to CAP}\label{CVP_section}

We now consider a reduction from CVP$_\gamma$ to CAP$_\gamma$ that is almost the same as Algorithm \ref{SVP_reduction}. However, we must transform our target vector before solving CAP$_\gamma$.

\begin{figure}[H]
\begin{algorithm}[H]
\caption{CVP$_\gamma$ to CAP$_\gamma$}\label{CVP_reduction}
    \KwIn{ $k\in \N$, $\gamma=a/b$ where $a,b\in \N$ and $k\geq a\ge b$, $\mb{t}\in\Q^n \,(\mb{t}=(t_1,t_2,\dots,t_n))$ such that $k\ge \textup{lcd}(t_i)$, and $M\in \Mn(\Z) \,(n\ge 8)$ with $ \det(M)\not = 0$ and $k\geq \max_{i,j} |(M)_{i,j}|$}
    \KwOut{ $\mb{z}_0\in\Z^n$ such that $0\leq\norm{M\mb{z}_0-\mb{t}}\leq \gamma\min_{\mb{z}\in\Z^n}\left\{\norm{M\mb{z}-\mb{t}}\right\}$}
     $(\mb{x},\tilde{M})\gets $ Algorithm \ref{approximate} ($k, M$)\\
     $b_0 \gets \CAP(\gamma, \mb{x}, (M\tilde{M})^{-1}\mb{t})$\\
    \Return $M\tilde{M}(\{b_0\mb{x}-\mb{t}\} +\mb{t})$
\end{algorithm}
\end{figure}

\begin{theorem}
    The output of Algorithm \ref{CVP_reduction} solves the initial approximate Closest Vector Problem under the $\ell_1,\ell_2,$ or $\ell_\infty$-norm.
\end{theorem}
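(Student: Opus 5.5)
The approach follows the SVP and SIVP proofs: move the CVP instance into SA coordinates with the linear map $(M\tilde{M})^{-1}$, solve CAP there, and pull the answer back with $M\tilde{M}$, using Proposition \ref{gap-preservation} to control the distortion.

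\textbf{Step 1: the coordinate change.} Let $\mb{t}' = (M\tilde{M})^{-1}\mb{t}$ be the transformed target passed to the oracle on line 2. Let $\mb{y} = \{b_0\mb{x} - \mb{t}'\}$. Since $\{b_0\mb{x}-\mb{t}'\} = b_0\mb{x} - \mb{t}' - \mb{w}$ for some $\mb{w}\in\Z^n$, the vector $\mb{y}+\mb{t}'$ lies in the SA lattice $\lat' = \Z^n + \Z\mb{x}$. By Proposition \ref{approximate_prop}, $M\tilde{M}\lat' = M\Z^n$. Hence the returned vector $M\tilde{M}(\mb{y}+\mb{t}')$ lies in $M\Z^n$. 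I read the $\mb{t}$ in line 3 as the transformed target $\mb{t}'$. Its distance to $\mb{t}$ is $\norm{M\tilde{M}\mb{y}}$, because $M\tilde{M}\mb{t}'=\mb{t}$.

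\textbf{Step 2: apply Proposition \ref{gap-preservation}.} Every vector $M\mb{z}-\mb{t}$ with $\mb{z}\in\Z^n$ can be written as $M\tilde{M}\mb{u}$ with $\mb{u}\in \lat' - \mb{t}'$. Conversely, the minimum over $b\in\Z$ of $\norm{\{b\mb{x}-\mb{t}'\}}$ ranges over the same coset, up to the convention that fractional parts give the canonical representatives. So the CAP guarantee says $\norm{\mb{y}}\le\gamma\norm{\mb{u}}$ for all $\mb{u}$ in the set $\mathbb{X} = \{\{b\mb{x}-\mb{t}'\}: b\in\Z\}$.

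I would apply Proposition \ref{gap-preservation} with this $\mathbb{X}$ and $\alpha$ equal to the true CVP distance. To bound $\alpha$, note that $\mb{t}$ lies within distance about $n|\det M|$ of a point of $\det(M)\Z^n\subseteq M\Z^n$.

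\textbf{Main obstacle.} Proposition \ref{gap-preservation} as stated needs integrality. It assumes $\mathbb{X}\subset\Z^n$ and uses that $\norm{M\tilde{M}\mb{y}}^2$ is an integer. Here the distances $\norm{M\mb{z}-\mb{t}}$ are rational with denominator dividing $d=\textup{lcd}(t_i)\le k$.

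I would first clear denominators: multiply the whole instance by $d$. Then every $d(M\mb{z}-\mb{t})$ is an integer vector, the squared norms are integers, and the hypothesis $k\ge d$ together with the $\alpha=t/d$ form in Proposition \ref{gap-preservation} appear designed for exactly this. I would redo the proposition's integrality step for the scaled quantities. This may require $\alpha\le d|\det M|$ instead of $|\det M|$, which I would absorb using $d\le k$ and the slack in the choice of $c$.

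Scaling by $d$ does not change the ratio of norms. So the sufficient condition on $c$ derived in that proof still holds with $c = 1728(nk)^{3n+15}$, giving $\norm{M\tilde{M}\mb{y}}\le\gamma\min_{\mb{z}}\norm{M\mb{z}-\mb{t}}$. Finally, $\mb{z}_0$ is recovered as $M^{-1}$ applied to the output, which is $\tilde{M}(\mb{y}+\mb{t}')$. It is integral by Step 1.
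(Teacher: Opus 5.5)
Your argument is essentially the paper's: the paper also applies Proposition~\ref{gap-preservation} with $\alpha$ equal to the true CVP distance, and that proposition's integrality step already carries the denominator $d=\textup{lcd}(t_i)\le k$ through the $\alpha=t/d$ form, so your ``clear denominators'' step is equivalent; your bound $\alpha\lesssim n|\det M|$ is more accurate than the paper's $\alpha\le|\det M|$, which can fail for $\ell_1,\ell_2$ (e.g.\ $M=I_n$, $\mb{t}=(\tfrac12,\dots,\tfrac12)$), and the slack in $c$ does absorb the extra factor. The one thing to tighten is your choice of $\mathbb{X}$: apply the proposition to the whole coset $\lat'-\mb{t}'$, as the paper does with ``all $\mb{u}$ in the SA lattice''. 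This is legitimate because centered fractional parts are coordinatewise minimal modulo $\Z^n$, whereas using only the representatives $\{b\mb{x}-\mb{t}'\}$ would not let the conclusion cover every $M\mb{z}-\mb{t}$.
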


\begin{proof}
   
Let $\alpha $ be the minimum distance between the target vector and a lattice vector in $M\Z^n$. $\alpha$ will be rational with denominator equal to $\textup{lcd}(t_i)\leq k$ and since we have $\det(M)I_n$  as a sublattice of $M\Z^n$, then $\alpha\leq |\det M|.$ 

Let $\mb{y}=\{b_0\mb{x}-\mb{t}\} +\mb{t}$ as in line 3. Note that $\{b\mb{x}-\mb{t}\} = b\mb{x}-\mb{t}-\lceil b\mb{x}-\mb{t}\rfloor$, so the solution to CVP$_\gamma$ in the SA lattice is $b_0\mb{x}-\lceil b_0\mb{x}-\mb{t}\rfloor = \{b_0\mb{x}-\mb{t}\} +\mb{t}$, which is $\mb{y}$. We see that $M\tilde{M}\mb{y}$ is in $M\Z^n$ and we can say $\norm{\mb{y} - (M\tilde{M})^{-1}\mb{t}}\le \gamma  \norm{\mb{u}-(M\tilde{M})^{-1}\mb{t}}$ for all $\mb{u}$ in the SA lattice.

Applying Proposition \ref{gap-preservation} with $\alpha = \alpha$ as above and $\gamma$ as in the input of Algorithm \ref{CVP_reduction} gives that
\begin{align*}
    \norm{M\tilde{M}\big(\mb{y}-(M\tilde{M})^{-1}\mb{t}\big)}&\le \gamma\norm{M\tilde{M}\big(\mb{u}-(M\tilde{M})^{-1}\mb{t}\big)}
    \\\norm{M\tilde{M}\mb{y}-\mb{t}}&\le \gamma\norm{M\tilde{M}\mb{u}-\mb{t}}
\end{align*}
for all $\mb{u}$ in the SA lattice.
Hence, $M\tilde{M}\mb{y}$ is a solution to CVP$_\gamma$ in $M\Z^n$.
\end{proof}

In 1997, Arora et al. proved the following about the hardness of CVP$_\gamma$ \cite{cvp_hard}:

\begin{theorem}
    CVP$_\gamma$ is NP-hard for any constant $\gamma$.
\end{theorem}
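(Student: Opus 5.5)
The statement to prove is that CVP$_\gamma$ is NP-hard for every constant $\gamma$ (Arora et al.). The paper presumably just cites \cite{cvp_hard} for this. Even so, I would sketch the standard route, which reduces from a gap version of a combinatorial covering problem: Exact Set Cover, or equivalently Label Cover, which via the PCP theorem and parallel repetition is hard to approximate within any constant.

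I would first take a Label Cover (or Set Cover) instance with the following gap for an arbitrary constant $g$. In the YES case there is an exact cover by $K$ sets. In the NO case every cover needs more than $gK$ sets, and in particular any collection of sets either leaves many elements uncovered or uses many sets. The gap amplification is supplied by the PCP machinery, which I would treat as a black box.

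Next I build the lattice. Take a universe $U$ and sets $S_1,\dots,S_m$. Form the basis with columns $\mb{b}_j = (L\cdot\chi_{S_j}, \mb{e}_j)$, where $\chi_{S_j}$ is the indicator vector of $S_j$ and $L$ is a large weight. Take the target $\mb{t}=(L\cdot\mathbf{1},\mathbf{0})$. For a coefficient vector $\mb{z}$, the distance $\norm{B\mb{z}-\mb{t}}$ decomposes into two parts. One is $L$ times the deviation of $\sum z_j\chi_{S_j}$ from $\mathbf{1}$. The other is the size of $\mb{z}$, which counts the sets used.

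In the YES case, the exact cover gives distance $K^{1/p}$ in the $\ell_p$-norm. In the NO case there are two possibilities. If $\sum z_j\chi_{S_j}\neq\mathbf{1}$, the distance is at least $L$. Otherwise the support of $\mb{z}$ forms a cover, so it has more than $gK$ sets and the distance exceeds $(gK)^{1/p}$. Choosing $L$ larger than $\gamma K^{1/p}$ and $g$ with $g^{1/p}>\gamma$ then yields the constant gap $\gamma$.

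The main obstacle is this NO-case analysis. Integer coefficients may be negative or larger than $1$, so the support of a $\mb{z}$ satisfying $\sum z_j\chi_{S_j}=\mathbf{1}$ need not correspond to an actual cover. My first attempt would use the Label Cover structure, as Arora et al.\ do: replace covers by labelings and argue that any integer combination hitting the target induces a "total" labeling. The NO-case soundness then bounds the number of labels from below, since its support must cover every element.
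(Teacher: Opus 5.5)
The paper gives no proof of this statement; it only cites Arora et al. Your Gap-Exact-Set-Cover construction is the standard one, and for $1\le p<\infty$ it works without the Label Cover detour, because the obstacle you flag is not real. Suppose $\mb{z}$ is an integer vector with $\sum_j z_j\chi_{S_j}=\mathbf{1}$. Then each $u\in U$ lies in some $S_j$ with $z_j\neq 0$, since otherwise the $u$-th coordinate of the sum would be $0$. So $\{S_j : z_j\neq 0\}$ is a cover. It need not be exact, but the NO promise bounds \emph{every} cover. Nonzero integers have absolute value at least $1$, so $\norm{\mb{z}}_p^p\ge |\{j : z_j\neq 0\}|>gK$. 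Negative or large coefficients only increase the distance. Your final sentence already contains this observation.

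The genuine gap is the $\ell_\infty$-norm, which the paper relies on: its corollary for CAP$_\gamma$ is stated under $\ell_\infty$ as well. Your whole gap comes from $\norm{\mb{z}}_p$ counting the support, and the condition $g^{1/p}>\gamma$ cannot be met when $p=\infty$. Concretely, the YES distance is $1$. In a NO instance, nothing prevents an integer $\mb{z}$ with entries in $\{-1,0,1\}$ and $\sum_j z_j\chi_{S_j}=\mathbf{1}$, for example an exact cover using more than $gK$ sets. Such a $\mb{z}$ also has distance $1$, so the reduction yields no gap at all.

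Handling $\ell_\infty$ needs a different encoding, in which the $\ell_\infty$-norm is forced to count the labels used at a single vertex. Arora et al.\ give a separate reduction from a variant of Label Cover whose cost is the maximum number of labels assigned to one vertex. Alternatively, Dinur's result, which the paper already cites for SVP$_\infty$, also covers CVP$_\infty$.
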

This immediately gives the following corollary:
\begin{corollary}
    CAP$_\gamma$ is NP-hard for any constant $\gamma$ under the $\ell_1,\ell_2,$ or $\ell_\infty$-norm.
\end{corollary}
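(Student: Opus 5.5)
The statement to prove is the last corollary: CAP$_\gamma$ is NP-hard for every constant $\gamma$ under the $\ell_1,\ell_2,$ or $\ell_\infty$-norm. The plan is to compose the NP-hardness of CVP$_\gamma$ (Arora et al.) with Algorithm \ref{CVP_reduction}, and then check that this composition is a legitimate deterministic polynomial-time (Cook/Turing) reduction that keeps $\gamma$ fixed.

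First I fix a constant $\gamma$ and write it as $a/b$ with $a\ge b$. A CVP$_\gamma$ instance (rational basis $M$, target $\mb{t}$) can be scaled by a common denominator so that $M\in\Mn(\Z)$ and $\mb{t}$ has bounded denominators. This changes distances only by a fixed factor, so it does not affect the gap. I then set
$$k=\max\{a,\ \max_{i,j}|(M)_{i,j}|,\ \textup{lcd}(t_i)\},$$
which is polynomial in the input length because $a$ is a constant. Two side conditions must be handled:
\begin{itemize}
\item If $\det M=0$, I first extract a basis with polynomial-time linear algebra such as HNF, restricting to the lattice's span.
\item If $n<8$, the instance is solvable in polynomial time by enumeration in fixed dimension, or it can be padded to dimension $8$ by a direct sum with a scaled identity block that does not change the closest distance.
\end{itemize}
After this the hypotheses of Algorithm \ref{CVP_reduction} are satisfied.

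Next I run Algorithm \ref{CVP_reduction} with the CAP$_\gamma$ oracle. The theorem just proved shows its output solves CVP$_\gamma$ in $M\Z^n$ with the same $\gamma$, because Proposition \ref{gap-preservation} yields exactly the factor $\gamma$.

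The running time is bounded as follows:
\begin{itemize}
\item Algorithm \ref{approximate} takes $\bigo\big(n^5\log^3(nk)\big)$ integer operations by Proposition \ref{approx_runtime}.
\item Theorem \ref{overall_inflation} bounds every intermediate bitlength, and in particular the entries of $\mb{x}$, by $\bigo\big(n^2\log(nk)\big)$.
\item The extra steps, namely computing $(M\tilde{M})^{-1}\mb{t}$ and the final multiplication, are polynomial by Hadamard's inequality.
\end{itemize}
Hence a polynomial-time algorithm for CAP$_\gamma$ would give one for CVP$_\gamma$, so CAP$_\gamma$ is NP-hard.

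The main obstacle is not the composition itself but confirming the hypotheses of the reduction. The requirement $k\ge a$ is fine only because $\gamma$ is constant. The NP-hard CVP instances must also be expressible in the integer form the algorithm assumes (integer, nonsingular basis, rational target); I would rely on the standard fact that the hard instances of Arora et al. already have this form.
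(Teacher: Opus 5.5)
Your proposal follows the paper's route: the paper also obtains the corollary by composing Arora et al.'s NP-hardness of CVP$_\gamma$ with Algorithm \ref{CVP_reduction}, whose correctness and polynomial running time (Theorem \ref{time_complexity}) were established just before, and it states the result as immediate without the bookkeeping you add. One correction to that bookkeeping concerns the full-rank requirement: Arora et al.'s hard instances are not in general given by square nonsingular bases, and ``restricting to the span'' does not yield an equivalent $r\times r$ integer instance (there is generally no rational $\ell_p$-isometry onto $\R^r$, and $\mb{t}$ may lie off the span); instead, use the standard embedding that adjoins large integer multiples $\beta V$ of vectors completing the span, which, for $\beta$ of polynomial bitlength, leaves the distance to $\mb{t}$ unchanged and forces every $\gamma$-approximate solution back into the original lattice.
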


As mentioned before, the majority of the computational complexity of Algorithms \ref{SVP_reduction}, \ref{SIVP_reduction} and \ref{CVP_reduction} occurs in the call to Algorithm \ref{approximate}. Thus, the following comes immediately from Proposition \ref{approx_runtime}:

\begin{theorem}\label{time_complexity}
    Let $k$ be as in the input to Algorithms \ref{SVP_reduction}, \ref{SIVP_reduction}, and \ref{CVP_reduction}. Then
Algorithms  \ref{SVP_reduction}, \ref{SIVP_reduction}, and \ref{CVP_reduction} each take $\bigo\big(n^5\log^3(nk)\big)$ integer operations.
\end{theorem}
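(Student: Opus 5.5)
Each of Algorithms \ref{SVP_reduction}, \ref{SIVP_reduction}, and \ref{CVP_reduction} consists of one call to Algorithm \ref{approximate}, one oracle call (counted as a single operation), and a constant amount of post-processing. I will therefore bound the post-processing cost for each algorithm and show it is dominated by the $\bigo\big(n^5\log^3(nk)\big)$ count from Proposition \ref{approx_runtime}. The bulk of the bound comes directly from Proposition \ref{approx_runtime}.

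First, line 1 of every reduction is a single invocation of Algorithm \ref{approximate} on $(k,M)$. By Proposition \ref{approx_runtime} this costs $\bigo\big(n^5\log^3(nk)\big)$ integer operations.

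Second, I handle the remaining lines. The oracle call on line 2 is unit cost. In Algorithm \ref{CVP_reduction} line 2 also forms $(M\tilde{M})^{-1}\mb{t}$, which takes one matrix product, one inversion, and one matrix--vector product: $\bigo(n^3)$ operations (e.g.\ via Bareiss or adjugate computation, as in the proof of Proposition \ref{approx_runtime}). Line 3 computes $\{b_i\mb{x}\}$ entrywise, at $\bigo(n)$ operations per vector. For Algorithm \ref{SIVP_reduction} this is done for $n$ vectors, costing $\bigo(n^2)$. The line then multiplies by $M\tilde{M}$. I precompute $M\tilde{M}$ once in $\bigo(n^3)$ and apply it to each vector in $\bigo(n^2)$, for a total of $\bigo(n^3)$ even in the SIVP case. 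The CVP case additionally subtracts and adds $\mb{t}$, which is $\bigo(n)$.

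Summing gives $\bigo\big(n^5\log^3(nk)\big)+\bigo(n^3)=\bigo\big(n^5\log^3(nk)\big)$. The only point needing care is that the rationals involved stay polynomially sized, so that counting integer operations is meaningful. Theorem \ref{overall_inflation} bounds the bitlength of $\mb{x}$ by $\bigo\big(n^2\log(nk)\big)$, and Lemma \ref{inflation} bounds the entries of $\tilde{M}$. So none of these post-processing steps introduces super-polynomial blowup, and the count of operations stands.
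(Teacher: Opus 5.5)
Your proposal is correct and follows the same approach as the paper: you invoke Proposition \ref{approx_runtime} for the call to Algorithm \ref{approximate} and observe that the remaining work is $\bigo(n^3)$ integer operations, which is dominated by that bound. Your accounting is slightly more careful than the paper's, since you explicitly charge for forming $(M\tilde{M})^{-1}\mb{t}$ on line 2 of Algorithm \ref{CVP_reduction} and for the $n$ output products in Algorithm \ref{SIVP_reduction} (by precomputing $M\tilde{M}$), whereas the paper's proof folds both into ``the multiplications in the output.''
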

\begin{proof}
    The only computations in Algorithms \ref{SVP_reduction}, \ref{SIVP_reduction}, and \ref{CVP_reduction} other than calls to Algorithm \ref{approximate} or an oracle  are the multiplications in the output. Each of these takes $\bigo(n^3)$ integer operations. Thus, each algorithm takes $\bigo\big(n^5\log^3(nk)\big)$ integer operations by Proposition \ref{approx_runtime}.
\end{proof}

\subsection{Optimal Integer Inflation}\label{optimal_section}

Consider the following lemma about the covolume of an SA lattice:

\begin{lemma}\label{covol}
    For some $d\in\N$, let $\lat_{\text{SA}}$ be the lattice generated by the columns of $dI_n$ and $\mb{x}\in\Z^n$ where the entries of $\mb{x}$ are collectively coprime. Then the covolume of $\lat_{\text{SA}}$ is $d^{n-1}.$
\end{lemma}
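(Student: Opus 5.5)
We compute the covolume as an index. Since $dI_n$ generates $d\Z^n$ and $\mb{x}\in\Z^n$, we have $d\Z^n\subseteq\lat_{\text{SA}}\subseteq\Z^n$. The index $[\Z^n:d\Z^n]$ equals $d^n$. So the covolume of $\lat_{\text{SA}}$ is
$$\det(\lat_{\text{SA}}) = [\Z^n:\lat_{\text{SA}}] = \frac{d^n}{[\lat_{\text{SA}}:d\Z^n]}.$$
Hence it suffices to show $[\lat_{\text{SA}}:d\Z^n]=d$.

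The quotient $\lat_{\text{SA}}/d\Z^n$ is the subgroup of $(\Z/d\Z)^n$ generated by the image $\bar{\mb{x}}$ of $\mb{x}$. It is therefore cyclic, of order equal to the additive order of $\bar{\mb{x}}$. This order is the smallest $m\geq 1$ with $d\mid m x_i$ for all $i$, which is $d/\gcd(d,x_1,\dots,x_n)$.

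Since the entries of $\mb{x}$ are collectively coprime, $\gcd(x_1,\dots,x_n)=1$. Hence $\gcd(d,x_1,\dots,x_n)=1$ and the order is exactly $d$. This gives covolume $d^n/d=d^{n-1}$.

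The only point needing care is the order computation: $d\mid m x_i$ for all $i$ is equivalent to $d\mid m\gcd(x_1,\dots,x_n)$, using a B\'ezout combination of the $x_i$. Here that reduces to $d\mid m$. Alternatively, one could argue via Smith normal form or Hermite normal form of the $n\times(n+1)$ generator matrix $(dI_n\mid\mb{x})$. The $\gcd$ of its $n\times n$ minors is $\gcd(d^n, d^{n-1}x_i)=d^{n-1}$, which is exactly the covolume. That is a one-line alternative I would mention as a check.
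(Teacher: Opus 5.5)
Your proof is correct and follows the same route as the paper: both compute $[\Z^n:\lat_{\text{SA}}]$ by passing to $(\Z/d\Z)^n$ and showing that the image $\langle\bar{\mb{x}}\rangle$ of $\lat_{\text{SA}}$ has order $d$, so the index is $d^n/d=d^{n-1}$. Your explicit computation of that order as $d/\gcd(d,x_1,\dots,x_n)$ is more careful than the paper, which only notes $\bar{\mb{x}}\neq\mb{0}$ before asserting $|\langle\bar{\mb{x}}\rangle|=d$.
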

\begin{proof}
    Note that $\lat_{\text{SA}} = d\Z^n + \mb{x}\Z$ and consider the canonical reduction $\phi:\Z^n\rightarrow (\Z/d\Z)^n$.
    Then by the third isomorphism theorem, $$\Z^n/\lat_{\text{SA}}\cong (\Z/d\Z)^n/(\lat_{\text{SA}}/d\Z^n)\cong  (\Z/d\Z)^n/\langle \mb{x}\rangle.$$

    Since the entries of $\mb{x}$ are collectively coprime, they are also collectively coprime  modulo $d$.
    Then $\mb{x}\not= \mb{0}$ over $\Z/d\Z$ and thus $|\langle\mb{x}\rangle| = d.$
    Since $|(\Z/d\Z)^n|=d^n$, 
    \begin{equation*}[\Z^n:\lat_{\text{SA}}]=|\Z^n/\lat_{\text{SA}}| = |(\Z/d\Z)^n/\langle\mb{x}\rangle| = d^{n-1}.\qedhere\end{equation*}
\end{proof}

By approximating a lattice by its SA counterpart, we immediately get simple candidates for approximately short vectors in both lattices.

\begin{lemma}\label{approx_short_lemma}
Suppose we have some polynomial-time algorithm $A$ that takes an instance of SVP$_\gamma$ with input entries bounded by $k$ and outputs an instance of SAP$_\gamma$ with 
entries bounded by $\bigo(k^{n^t})$ along with a matrix $B$ such that if $\mb{y}$ is a solution to the instance of SAP$_\gamma$, then  $MB\mb{y}$ is a solution to the instance of SVP$_\gamma$ with gap-preservation. Then, we can find approximately short vectors in $M\Z^n$ with $\gamma =\bigo( k^{n^{t-1}})$.     
\end{lemma}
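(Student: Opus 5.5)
The plan is to use the SA instance produced by $A$ as a source of an explicit short vector, via Minkowski's theorem applied to the SA lattice and Lemma \ref{covol}. Run $A$ on the SVP$_\gamma$ instance $(k,M)$ to get $\mb{x}\in\Q^n$ and $B$. The entries of $\mb{x}$ are bounded by $\bigo(k^{n^t})$, so write $\mb{x}=\mb{x}'/d$ with $\mb{x}'\in\Z^n$ and common denominator $d$. Since the bitlength of $\mb{x}$ is polynomial, $d$ is at most $k^{\bigo(n^{t})}$; more precisely, $\log d = \bigo(n^t\log k)$ in the setting of Theorem \ref{overall_inflation}. After clearing the gcd, assume the entries of $\mb{x}'$ together with $d$ are collectively coprime. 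Scaling the SA lattice by $d$ gives the lattice generated by $dI_n$ and $\mb{x}'$. By Lemma \ref{covol}, or its variant when only $\gcd(\mb{x}',d)=1$, this lattice has covolume $d^{n-1}$. So the SA lattice itself has covolume $d^{-1}$.

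Next I produce the explicit short vector. By Minkowski's theorem, the SA lattice has $\lambda_1\le \sqrt{n}\,d^{-1/n}$ in $\ell_2$, with analogous bounds in $\ell_1$ and $\ell_\infty$. This is not constructive, so instead I use the simple candidate $\{b\mb{x}\}$ for a fixed small $b$ (for example $b=1$), or the vector $\mb{e}_i$. Either has norm at most $\sqrt n$, which is trivially within a factor $\sqrt n\, d^{1/n}/\!\cdot$ of $\lambda_1$ only after a lower bound. For that lower bound I use integrality: any nonzero vector of the SA lattice lies in $d^{-1}\Z^n$, so $\lambda_1\ge 1/d$. Hence any candidate such as $\mb{e}_1$ solves SAP with approximation factor $\gamma'\le \sqrt n\cdot d$. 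To do better, I pick $\mb{y}$ via pigeonhole, applying Dirichlet's simultaneous approximation to $\mb{x}$: there is $1\le b\le Q^n$ with $\norm{\{b\mb{x}\}}_\infty\le 1/Q$. Choosing $Q=d^{1/n}$ matches the Minkowski bound, but finding such a $b$ is not polynomial. So I will settle for the trivial candidate and measure the factor against $\lambda_1\ge 1/d$ and Minkowski.

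Then I transfer to $M\Z^n$. By the gap-preservation hypothesis on $A$ (as in Proposition \ref{gap-preservation}), $MB\mb{y}$ approximates $\lambda_1(M\Z^n)$ within the same factor as $\mb{y}$ approximates $\lambda_1$ of the SA lattice, up to the $1\pm\epsilon$ operator-norm distortion of Lemma \ref{opnorm_prop}. Substituting $d$ yields a factor of $k^{\bigo(n^t)/n}$ times polynomial factors, that is $\gamma=\bigo(k^{n^{t-1}})$.

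The main obstacle is this last exponent bookkeeping. Getting $d^{1/n}$ rather than $d$ requires the candidate vector's approximation ratio to lose only an $n$-th root. This follows from comparing against Minkowski's upper bound $d^{-1/n}$ together with the fact that the transferred vector is a genuine lattice vector of norm controlled by $\opnorm{MB}$ and $\lambda_1(M\Z^n)\ge 1$. I would first try to bound $\norm{MB\mb{e}_1}$ directly, using the entry bounds on $B$, against $\lambda_1(M\Z^n)\ge1$, and express the result via $d^{1/n}$.
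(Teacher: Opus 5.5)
Your outline uses the same ingredients as the paper's proof: rescale to $dI_n$ and an integral $\mb{x}'$, apply Lemma \ref{covol} and Minkowski, take a generating vector as the candidate, substitute $d=\bigo(k^{n^t})$, and transfer through $MB$. But the step that is supposed to produce the exponent $n^{t-1}$ fails. The only valid estimate you derive is in your second paragraph: $\lambda_1\ge 1/d$ gives $\norm{\mb{e}_1}/\lambda_1\le \sqrt n\,d$, which is $\bigo(k^{n^t})$. Your last paragraph says the $n$-th root ``follows from comparing against Minkowski's upper bound''. But $\lambda_1\le\sqrt n\,d^{-1/n}$ only gives the \emph{lower} bound $\norm{\mb{e}_1}/\lambda_1\ge d^{1/n}/\sqrt n$. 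An upper bound on the ratio needs a lower bound $\lambda_1\gtrsim d^{-1/n}$, and this is false for SA lattices in general. For example, take $d=2m+1$ and $\mb{x}'=(m,m+1,0,\dots,0)$. Then $2\mb{x}'-d\mb{e}_1-d\mb{e}_2=(-1,1,0,\dots,0)$, so the rescaled lattice has $\lambda_1\le\sqrt2$, while every generator has length at least $m\approx d/2$. So $\mb{e}_1$ (or $\{\mb{x}\}$) is off by a factor of order $d$, not $d^{1/n}$. The paper's proof makes exactly this inversion: it divides the generator length $d$ by the Minkowski \emph{upper} bound $d/d^{1/n}$ and treats the result as an upper bound on the ratio. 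Following it will therefore not close your gap.

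A usable lower bound has to come from the original lattice, $\lambda_1(M\Z^n)\ge 1$, which is what your final sentence gestures at. Under the lemma's hypotheses, however, nothing relates $B$ to $d$ or bounds $\norm{MB\mb{e}_1}$. Also, the gap-preservation hypothesis only concerns the input $\gamma$ (at most $k$ in Proposition \ref{gap-preservation}), not a factor as large as $d^{1/n}$. In the concrete setting of Algorithm \ref{approximate}, your idea does work. The SA lattice equals $\tilde M^{-1}\Z^n$, so $d=|\det\tilde M|\approx c^n|\det M|^{n-1}$. Since $M\tilde M=c\det M\,(I_n+MA/(c\det M))$ is a near-scaled isometry (Lemma \ref{opnorm_prop}), you get $\norm{M\tilde M\mb{e}_1}/\lambda_1(M\Z^n)\lesssim c|\det M|\approx (d\,|\det M|)^{1/n}\le \sqrt n\,k\,d^{1/n}$. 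Note, however, that once you invoke $\lambda_1(M\Z^n)\ge 1$, the columns of $M$ (norm at most $nk$) already give factor $nk$. That is $\bigo(k^{n^{t-1}})$ for $k\ge 2$ and $t\ge 2$. So the conclusion as stated holds trivially and independently of $A$ and $d$. Keep this in mind before reading the lemma as evidence that the inflation bound is optimal.
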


\begin{proof}
    Let $\lat_{\text{SA}}$ be the SA lattice output by Algorithm $A$.
    By scaling, we can consider $\lat_{\text{SA}}$ as generated by  the columns of $dI_n$ for some $d\in\N$ and $\mb{x}\in\Z^n$, where the entries of $\mb{x}$ are reduced mod $d$. Then $\mb{x}$ has length less than $d\sqrt{n}$ and each $d\mb{e}_i$ has length $d$.

    By Lemma \ref{covol}, the covolume of $\lat_{\text{SA}}$ is $d^{n-1}$.
    By Minkowski's theorem \cite{Minkowski}, the shortest vector in $\lat_{\text{SA}}$ will have length less than $(d^{n-1})^{1/n} = d/d^{1/n}$.
    So the ratio between the lengths of the vectors in the generating set of $\lat_{\text{SA}}$ and the length of the shortest vector is $\gamma  \le\frac{d}{d/d^{1/n}}=d^{1/n}$.

    Algorithm $A$ ensures that $d=\bigo(k^{n^{t}})$ and thus $\gamma\le \bigo\big((k^{n^{t}})^{1/n}\big) = \bigo(k^{n^{t-1}})$.
    We can transform these generating vectors from $\lat_{\text{SA}}$ to get approximately short vectors in $M\Z^n$ with $\gamma=\bigo(k^{n^{t-1}})$.
    \end{proof}

Were an algorithm to exist that had a sub-quadratic bound on integer inflation, Lemma \ref{approx_short_lemma} says that we would have a method for approximating short vectors that is sub-exponential in $n$. More precisely,

\begin{theorem}
    Suppose there exists some polynomial-time algorithm $A_\varepsilon$ that takes an instance of SVP$_\gamma$ with input entries bounded by $k$ and outputs an instance of SAP$_\gamma$ with 
entries bounded by $\bigo(k^{n^{2-\varepsilon}})$ along with a matrix $B$ such that if $\mb{y}$ is a solution to the instance of SAP$_\gamma$, then  $MB\mb{y}$ is a solution to the instance of SVP$_\gamma$ with gap-preservation. Then, we can find approximately short vectors in $M\Z^n$ with $\gamma = \bigo(k^{n^{1-\varepsilon}})$.     
\end{theorem}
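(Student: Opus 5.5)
This is a direct specialization of Lemma \ref{approx_short_lemma}. The plan is to repeat its argument with exponent $t=2-\varepsilon$, so that the gap is $d^{1/n}=\bigo\big(k^{n^{1-\varepsilon}}\big)$.

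\emph{Step 1 (normalizing the output).} Run $A_\varepsilon$ on the given instance $(k,M)$. This yields $\mb{x}\in\Q^n$ with entries bounded by $\bigo(k^{n^{2-\varepsilon}})$, together with a matrix $B$. Clearing denominators, let $d$ be the common denominator of $\mb{x}$. Scaling the SA lattice by $d$ gives the lattice $\lat_{\text{SA}}=d\Z^n+\mb{x}'\Z$, with $\mb{x}'=d\mb{x}$ reduced mod $d$. I will argue that the size bound on the output (its bitlength is $\bigo(n^{2-\varepsilon}\log k)$) forces $d=\bigo(k^{n^{2-\varepsilon}})$. If the entries of $\mb{x}'$ share a common factor $g$ with $d$, I replace $d$ by $d/g$, which only improves the bound. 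This puts us in the setting of Lemma \ref{covol}, so the covolume of $\lat_{\text{SA}}$ is $d^{n-1}$.

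\emph{Step 2 (explicit short candidates).} The generators $d\mb{e}_i$ have length $d$, and $\mb{x}'$ has length at most $d\sqrt n$ (or at most $d$ in the $\ell_\infty$-norm). By Minkowski's theorem, $\lambda_1(\lat_{\text{SA}})\le \sqrt n\, d^{(n-1)/n}$, and the $\sqrt n$ factor disappears in $\ell_\infty$. Hence each of these explicit, polynomial-time-computable vectors is within a factor $d^{1/n}$ of the shortest vector, up to polynomial factors in $n$. Substituting the bound on $d$ gives
\[
\bigo\big((k^{n^{2-\varepsilon}})^{1/n}\big)=\bigo\big(k^{n^{1-\varepsilon}}\big).
\]
Scaling back by $1/d$ does not change this ratio.

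\emph{Step 3 (transfer).} Take $\mb{y}$ to be one of these candidates, say $\mb{e}_1$. It is a solution to SAP$_{\gamma'}$ with $\gamma'=\bigo(k^{n^{1-\varepsilon}})$. The gap-preservation hypothesis on $A_\varepsilon$ then gives that $MB\mb{y}$ solves SVP$_{\gamma'}$ in $M\Z^n$, in exactly the way Proposition \ref{gap-preservation} transfers solutions in Algorithm \ref{SVP_reduction}. Every step is polynomial time, since $A_\varepsilon$ is, and the remaining work is one matrix–vector product.

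\emph{Main obstacle.} The delicate point is Step 3. The hypothesis only promises gap-preservation for the particular $\gamma$ fed to $A_\varepsilon$, whereas we need it at $\gamma'=\bigo(k^{n^{1-\varepsilon}})$. I would first read the hypothesis as applying uniformly to whatever gap the SA solution achieves, which is how Lemma \ref{approx_short_lemma} implicitly uses it; in that case the proof is just an invocation of that lemma with $t=2-\varepsilon$. If that reading is too generous, the fallback is to call $A_\varepsilon$ with the target gap $\gamma'$ itself. This requires checking that $\gamma'$, written as a fraction $a/b$, fits the input size constraints, and I would track this carefully.

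A secondary point is Step 1: whether the bound on the entries of $\mb{x}$ really bounds the denominator $d$, as Lemma \ref{approx_short_lemma} assumes. I would handle this by interpreting ``entries bounded by'' as bounds on both the numerators and the denominators of the rational entries.
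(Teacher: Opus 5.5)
Your plan is the paper's own: the theorem is Lemma \ref{approx_short_lemma} with $t=2-\varepsilon$, and your Steps 1--3 retrace that lemma's proof. But Step 2 runs an inequality the wrong way, and it is the only step that produces the exponent $n^{1-\varepsilon}$. For $d\mb{e}_1$ (or $\mb{x}'$) to be a $\gamma'$-approximate shortest vector you need $\norm{d\mb{e}_1}\le\gamma'\lambda_1(\lat_{\text{SA}})$, which is a \emph{lower} bound on $\lambda_1$. Minkowski gives only the \emph{upper} bound $\lambda_1\le\sqrt n\,d^{(n-1)/n}$. From it you can conclude $\norm{d\mb{e}_1}/\lambda_1\ge d^{1/n}/\sqrt n$, and nothing in the other direction. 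What your computation actually bounds is the Hermite factor $\norm{\mb{v}}/\det(\lat_{\text{SA}})^{1/n}$, not the gap. Knowing the covolume does not stop $\lambda_1$ from being tiny, and the claim fails for SA lattices in general. Take $m$ even, $d=2m$ and $\mb{x}'=(m+1)(1,\dots,1)^T$. Since $\gcd(m+1,d)=1$, the covolume is $d^{n-1}$. But $2\mb{x}'-d(1,\dots,1)^T=2(1,\dots,1)^T$ shows $\lambda_1\le 2n$, while every generator has norm at least $m-1$. So each generator has gap at least $(m-1)/(2n)$, which is of order $d/n$ rather than $d^{1/n}$. The paper's proof of Lemma \ref{approx_short_lemma} contains the identical reversed step: from ``the shortest vector has length less than $d/d^{1/n}$'' it infers ``the ratio is at most $d^{1/n}$''. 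So citing the lemma does not repair the gap.

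Under the stated hypotheses, the only lower bound you have is $\lambda_1(\lat_{\text{SA}})\ge1$, which comes from $\lat_{\text{SA}}\subseteq\Z^n$. That makes $d\mb{e}_1$ a $d$-approximation, i.e.\ $\gamma'=\bigo(k^{n^{2-\varepsilon}})$, with no division of the exponent by $n$. To do better you would have to pull back $\lambda_1(M\Z^n)\ge1$ through $MB$, and that needs quantitative control of $MB$. For Algorithm \ref{approximate} you have it, since $M\tilde M$ is close to $c\det(M)I_n$; this gives $\mb{e}_1$ a gap of roughly $c|\det M|\approx(d\,|\det M|)^{1/n}$. An abstract gap-preservation hypothesis does not supply such control. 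Note also that when $k\ge2$ and $n$ is large, the conclusion as literally stated holds without $A_\varepsilon$ at all. Indeed $\lambda_1(M\Z^n)\ge1$, and every column of $M$ has norm at most $nk\le k^{n^{1-\varepsilon}}$. So a correct proof of the statement exists, but it is not the argument you or the paper give. The caveats you flag in Steps 1 and 3 are real, and the paper glosses over them in the same way, but they are secondary to the failure of Step 2.
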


Since Algorithm \ref{approximate} is such an algorithm as in Lemma \ref{approx_short_lemma} and gives an instance of SAP$_\gamma$ with entries bounded by $\bigo\big(c^n(kn)^{n^2}\big)$, it will produce short vectors with exponential approximation quality.
So Algorithm \ref{approximate} is optimal in integer inflation without achieving a sub-exponential in $n$ approximation for shortest vectors.

\bibliographystyle{plain}
\bibliography{bib.bib}
\end{document}